\newcommand{\blind}{0}
\definecolor{blue}{rgb}{0,0,0.9}
\definecolor{red}{rgb}{0.9,0,0}
\definecolor{green}{rgb}{0,0.9,0}
\definecolor{gray}{gray}{0.6}
\newtheorem{proposition}{Proposition}
\newtheorem{theorem}{Theorem}
\def\lam{\lambda} \def\alp{\alpha}
\def\sig{\sigma}
\def\inprod#1#2{\langle#1,#2\rangle}
\newcommand{\R}{\mathbb R}
\newcommand{\primobj}{\text{obj}_\text{primal}}
\newcommand{\dualobj}{\text{obj}_\text{dual}}
\def\diag#1{{\rm diag}(#1)}
\def\norm#1{\|#1\|}
\def\cT{{\cal T}}
\def\bfone{{\bf 1}}
\begin{document}

\def\spacingset#1{\renewcommand{\baselinestretch}%
{#1}\small\normalsize} \spacingset{1.45}

%%%%%%%%%%%%%%%%%%%%%%%%%%%%%%%%%%%%%%%%%%%%%%%%%%%%%%%%%%%%%%%%%%%%%%%%%%%%%%

\if0\blind
{
  \title{\bf Fast algorithms for large scale generalized distance weighted discrimination}
	\author{
		Xin Yee Lam\thanks{Department of Mathematics, National
			University of Singapore,
			10 Lower Kent Ridge Road, Singapore
			119076.}, \;
		J.S. Marron\thanks{Department of Statistics and Operations Research,
			University of North Carolina, Chapel Hill, NC 27599-3260 ({\tt marron@email.unc.edu}).},\;
		Defeng Sun\thanks{Department of Mathematics and Risk Management Institute, National
			University of Singapore,
			10 Lower Kent Ridge Road, Singapore
			119076 ({\tt matsundf@nus.edu.sg}).},
		and
		Kim-Chuan Toh\thanks{Department of Mathematics and Institute of Operations Research and Analytics, National
			University of Singapore,
			10 Lower Kent Ridge Road, Singapore
			119076 ({\tt mattohkc@nus.edu.sg}). }
	}
  \maketitle
} \fi

\if1\blind
{
  \bigskip
  \bigskip
  \bigskip
  \begin{center}
    {\LARGE\bf Fast algorithms for large scale generalized distance weighted discrimination}
\end{center}
  \medskip
} \fi

\bigskip
\begin{abstract}
High dimension low sample size statistical analysis is
important in a wide range of applications. In such situations,
the highly appealing discrimination method, support vector machine, can be improved to alleviate data piling at the margin. This leads naturally to the development of distance weighted discrimination (DWD), which can be modeled as a second-order cone programming problem and solved by interior-point methods when the scale  (in sample size and feature dimension)
of the data is moderate. Here, we design a scalable and robust algorithm for solving
large scale generalized DWD problems. Numerical experiments on
real data sets from the UCI repository demonstrate that our algorithm is highly efficient
in solving large scale problems, and sometimes even more efficient
than the highly optimized 
LIBLINEAR and 
LIBSVM for solving the corresponding SVM  problems.
\end{abstract}

\noindent%
{\it Mathematics Subject Classification:} 90C25, 90C06, 90C90 \\
{\it Keywords:}  convergent multi-block ADMM, data piling, 
support vector machine.
\vfill

%\spacingset{1.45} % DON'T change the spacing!
\section{Introduction}
%Body of paper.  Margins in this document are roughly 0.75 inches all
%around, letter size paper.

We consider the problem of finding a (kernelized) linear classifier
for a training data set $\{ (x_i, y_i)\}_{i=1}^n$ with
$x_i\in\R^d$ and the class label $y_i\in\{-1,1\}$ for all $i=1,\ldots,n.$
Here $n$ is the sample size (the number of data vectors available) and
$d$ is the feature dimension.
By far, the most popular and successful method for getting a good linear
classifier from the
training data is
the support vector machine (SVM), originally proposed by \cite{Vapnik}.
Indeed, it has been demonstrated in \cite{FCBA} that the kernel SVM is one of the best
performers in the pool of 179 commonly used classifiers.
Despite its success, it has been observed
in \cite{MarronTodd-2007} that SVM may suffer
from a ``data-piling" problem in the high-dimension-low-sample size (HDLSS) setting
(where the sample size $n$ is smaller than the
feature dimension $d$). The authors proposed a new linear classifier, called ``Distance Weighted Discrimination" (DWD),
as a superior alternative to the SVM.
DWD has become a workhorse method for a number of statistical tasks, including data visualization (\citealt{MarronAlonso}), hypothesis testing linked with visualization in very high dimensions (\citealt{wei2015}), and adjustment for data biases and heterogeneity (\citealt{benito,Liu}).

It is well known that there is a strong need for efficient HDLSS methods
for the settings where $d$ is large, say in the order of $10^4$--$10^5$, especially
in the area of genetic molecular measurements (usually having a small sample size, where many gene level features have been measured), chemometrics (typically a small sample of high dimensional spectra) and medical image analysis (a small sample of 3-d shapes represented by high-dimensional vectors). On the other hand,
given the advent of a huge volume of data
collectible through various sources, especially from the internet,
it is also important for us to consider the case where the sample size $n$ is large, while
the feature dimension may be moderate.
Thus in this paper, we are interested in
the problem of finding a linear classifier
through DWD for
data instances
where $n$ and/or $d$ are large.

In \cite{MarronTodd-2007}, DWD is formulated as a second-order cone
programming (SOCP) problem, and the resulting model is
solved by using a primal-dual interior-point method for SOCP problems implemented
in the software SDPT3 (\citealt{SDPT3}).
However, the IPM based solver employed for DWD in \cite{MarronTodd-2007}
is computationally very expensive for solving problems
where $n$ or $d$ is large, thus making it impractical for large scale problems.
A recent approach to overcome such a computational bottleneck has appeared
in \cite{WangZou}, where the authors proposed  a novel reformulation
of the primal DWD model which consists of minimizing a highly nonlinear convex
objective function subject to a ball constraint. The resulting problem
is solved via its dual and an MM (minimization-majorization) algorithm
is designed to compute the Lagrangian dual function for each given dual multiplier.
The algorithm appears to be  quite promising in theory for solving large scale DWD problems.
However, the current numerical experiments and implementation of the proposed
algorithm in \cite{WangZou} are  preliminary and limited
to small scale data instances, and it appears that substantial work must be
done to make it efficient for large scale instances.
Hence it is premature  to compare our
proposed algorithm with the one in \cite{WangZou}.

The main contribution of this paper is to design
a new method for solving  large scale DWD problems, where we target to solve a problem
with the
sample size $n \approx 10^4$--$10^6$ and/or the dimension $d\approx 10^4$--$10^5$.
Our method is a convergent 3-block semi-proximal alternating direction method of multipliers
(ADMM), which is designed based on the recent advances in research on convergent multi-block
ADMM-type methods (\citealt{STY,LST,CST}) for solving convex composite quadratic conic programming
problems.

The classical ADMM is initially proposed for solving a 2-block convex optimization problem
with a collection of coupling linear constraints. Over the years, there have been many variations of ADMM proposed and applied to a great variety of optimization problems. A natural modification is to extend the original ADMM from two-block to multi-block settings. However, in \cite{directADMM}, it was shown that the directly extended ADMM may not be convergent.
Thus it is necessary to make some modifications to the directly extended ADMM in order to
get  a convergent algorithm. In \cite{STY}, the authors proposed a semi-proximal ADMM for solving a convex conic programming problem with 3 blocks of variables and 4 types of constraints. The algorithm  is a convergent modification of the ADMM with an additional inexpensive step in each iteration. 
 In \cite{LST}, the authors proposed a Schur complement based (SCB) convergent semi-proximal ADMM for solving a multi-block linearly constrained convex programming problem whose objective function is the sum of two proper closed convex functions plus an arbitrary number of convex quadratic or linear functions. One of the key contributions in \cite{LST}  is the discovery of the Schur complement based decomposition method which allows the multi-block subproblems to be solved
efficiently  while ensuring the convergence of the algorithm.
More recently, \cite{LST2} generalized the SCB decomposition
method in \cite{LST} to the {\em inexact} symmetric Gauss-Seidel decomposition method, which
provides an elegant framework and simpler derivation.
Based on this previous research, in \cite{CST}, the authors proposed an {\em inexact} symmetric Gauss-Seidel based multi-block semi-proximal ADMM for solving a class of high-dimensional convex composite conic optimization problems, which has been demonstrated to have much better performance than the possibly non-convergent directly extended ADMM in solving high dimensional convex quadratic semidefinite programming problems.

Inspired by the above works, we propose a convergent 3-block semi-proximal ADMM, which is a
modification of the inexact sGS-ADMM algorithm designed in \cite{CST}
to solve the DWD model.
The first contribution we make is in reformulating the primal formulation of the {\em generalized} DWD model (using the
terminology from \cite{WangZou}) and adapting the
powerful inexact sGS-ADMM framework for solving the  reformulated problem.
This
is in contrast to numerous SVM algorithms which are primarily designed for solving
the dual formulation of the SVM model.
The second contribution we make is in designing highly efficient techniques to
solve the subproblems in each  of the inexact sGS-ADMM iterations.
If $n$ or $d$ is moderate, then the complexity at each iteration is $O(nd)+O(n^2)$ or $O(nd)+O(d^2)$ respectively. If both $n$ and $d$ are large, then we employ the  conjugate gradient iterative method for solving the large linear systems of equations involved.
We also devise various strategies to speed up the practical performance of
the sGS-ADMM algorithm in solving large scale instances (with the largest instance
having $n=256,000$ and $d\approx 3\times 10^6$) of DWD problems
with real data sets from the UCI machine learning repository (\citealt{UCI}).
We should emphasize that the key in achieving high efficiency in our algorithm depends very much on the intricate numerical techniques and
sophisticated implementation we have developed.

Finally,
we conduct extensive numerical experiments to evaluate the performance
of our proposed algorithm against a few other alternatives.
Relative to  the primal-dual interior-point method used in \cite{MarronTodd-2007},
our algorithm is vastly superior in terms of computational time and memory usage
in solving large scale problems, where our algorithm can be a few thousands times
faster. By exploiting all the highly efficient numerical techniques we have developed in
the implementation of the sGS-ADMM algorithm
for solving the generalized DWD problem, we can also
get an efficient implementation of
the possibly non-convergent directly extended ADMM for solving the same problem.
On the tested problems, our algorithm generally requires fewer iterations
compared to the directly extended ADMM even when the latter is convergent.
On quite a few instances, the directly extended ADMM actually requires many more
iterations than our proposed algorithm to solve the problems.
We also compare the efficiency of our algorithm in solving the generalized
DWD problem against the highly optimized {LIBLINEAR} (\citealt{LIBLINEAR}) and LIBSVM (\citealt{LIBSVM}) in solving the
corresponding dual SVM problem. Surprisingly, our algorithm can even be
more efficient than LIBSVM in solving large scale problems even though the
DWD model is more complex, and on
some instances, our algorithm is 50--100 times faster.
{Our DWD model is also able to produce the best test (or generalization) errors 
compared to LIBLINEAR and LIBSVM among the tested instances.}

The remaining parts of this paper are organized as follows. In section \ref{sec-GeneralDWD}, we present the DWD formulation in full detail. In section \ref{sec-sGSADMM}, we propose our inexact sGS-based ADMM method for solving large scale DWD problems. We also discuss some essential computational techniques used in our implementation.
We report our numerical results in section \ref{sec-experiments}. We will also compare the performance of our algorithm to other solvers on the same data sets in this particular section. Finally, we conclude the paper in section \ref{sec-conclusion}.

Notation. We denote the 2-norm of a vector $x$ by $\norm{x}$,
and the Frobenius norm of a matrix $M$ by $\norm{M}_F$.
The inner product of two vectors $x,y$ is denoted by $\inprod{x}{y}$.	
	If $S$ is a symmetric positive semidefinite matrix, then we denote the weighted norm of a vector $x$ with the weight matrix $S$ by $\norm{x}_{S} :=  \sqrt{\inprod{x}{S x}}$.

%%%%%%%%%%%%%%%%%%%%%%%%%%%%
\section{Generalized distance weighted discrimination}\label{sec-GeneralDWD}

This section gives details on the optimization problems underlying the  distance weighted discrimination.
Let $(x_i,y_i)$, $i=1,\ldots,n$, be the training data where
$x_i\in \R^d$ is the feature vector  and $y_i\in\{+1,-1\}$ is its corresponding class label. We let $X\in \R^{d\times n}$ be the  matrix whose columns are the $x_i$'s, and $y=[y_1,\ldots,y_n]^T$.
In linear discrimination, we attempt to separate the vectors in the two classes by
a hyperplane $H= \{ x\in \R^d\mid w^T x + \beta =0 \}$, where $w\in \R^d$ is the
unit normal and $|\beta|$ is its distance to the origin.
Given a point $z\in \R^d$, the signed distance between $z$ and the hyperplane $H$ is given by
$w^Tz + \beta$.
For binary classification where the label $y_i \in \{-1,1\}$, we want
\begin{equation*}
y_i(\beta+x_i^Tw)\geq 1-\xi_i \quad \forall\; i=1,...,n,
\end{equation*}
where we have added a slack variable $\xi \geq 0$ to allow the possibility that
the positive and negative data points may not be separated cleanly  by the hyperplane.
In matrix-vector notation, we need
\begin{equation}
r \;:=\; Z^Tw+\beta y +\xi \;\geq\; \bfone,
\label{eq-r}
\end{equation}
where $Z = X \diag{y}$ and
$\bfone\in \mathbb{R}^n$ is the vector of ones.

In SVM,  $w$ and $\beta$ are chosen by maximizing the minimum residual, i.e.,
\begin{eqnarray}
\max \Big\{ \delta - C \inprod{\bfone}{\xi} \mid Z^T w + \beta y +\xi \geq \delta \bfone, \; \xi \geq 0, \; w^Tw\leq 1 \Big\},
\label{eq-svm}
\end{eqnarray}
where $C > 0$ is a tuning parameter to control the level of penalization on $\xi$.
For the DWD approach introduced in \cite{MarronTodd-2007}, $w$ and $\beta$ are chosen instead by minimizing the sum of reciprocals of the $r_i$'s, i.e.,
\begin{eqnarray}
\min \Big\{   \sum_{i=1}^n \frac{1}{r_i} + C \inprod{\bfone}{\xi} \mid
r= Z^T w + \beta y +\xi, \; r > 0, \; \xi \geq 0, \; w^Tw\leq 1, \; w\in \R^d
\Big\}.
\label{eq-dwd}
\end{eqnarray}
Detailed discussions on the connections between the DWD model \eqref{eq-dwd} and
the SVM model \eqref{eq-svm} can  be found in \cite{MarronTodd-2007}.
The DWD optimization problem \eqref{eq-dwd} is shown to be equivalent to a second-order
cone programming problem in \cite{MarronTodd-2007} and  hence it can be
solved by interior-point methods such as those implemented in
the solver SDPT3 (\citealt{SDPT3}).

Here we design an algorithm which is capable of solving large scale generalized DWD problems
of the following form:
\begin{eqnarray}
\min \Big\{ \Phi(r,\xi) := \sum_{i=1}^n \theta_q (r_i) +C \inprod{ e}{ \xi} 
\mid \; Z^T w + \beta y +\xi -r =0, \;\;
 \norm{w} \leq 1,\;\xi \geq 0
 \Big\}
\label{eq-primal}
\end{eqnarray}
where $e\in \R^n$ is a given positive vector such that $\norm{e}_\infty =1$ (the last condition is for the purpose
of normalization).
The exponent $q$ can be any given positive number,  though the values of
most interest are likely to be $q=0.5,1,2,4$, and $\theta_q (r_i)$ is the function defined by
\begin{eqnarray*}
	\theta_q (t) = 
		\frac{1}{t^q} &\mbox{if $t>0$, } \;\mbox{and} \;\;\;
		\theta_q (t) =  \infty &\mbox{if $t\le0$. } 
\end{eqnarray*}
Observe that in addition to allowing for a general exponent $q$ in \eqref{eq-primal}, we also allow
for a nonuniform weight $e_i > 0$ in the penalty term for each $\xi_i$.
By a simple change of variables and modification of the data vector $y$, \eqref{eq-primal} can also include the
case where the terms in $\sum_{i=1}^n \frac{1}{r_i^q}$ are weighted non-uniformly.
For brevity, we omit the details.

\begin{proposition} Let  $\kappa = \frac{q+1}{q} q^{\frac{1}{q+1}}$. 
	The dual of problem \eqref{eq-primal} is given as follows:
	\begin{eqnarray}
	- \min_{\alp} \Big\{ \Psi(\alp) := \norm{Z\alp} - \kappa\sum_{i=1}^n \alp_i^{\frac{q}{q+1}}
	\mid 0 \leq \alp \leq Ce , \inprod{y}{\alp} =0
	\Big\},
	\label{eq-dual}
	\end{eqnarray}
\end{proposition}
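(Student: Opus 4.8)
The plan is to obtain \eqref{eq-dual} as the ordinary Lagrangian dual of \eqref{eq-primal}, attaching a free multiplier $\alp\in\R^n$ to the coupling equality $Z^Tw+\beta y+\xi-r=0$ while keeping the restrictions $\norm{w}\le 1$ and $\xi\ge 0$ (together with the implicit domain $r>0$ of $\theta_q$) inside the inner minimization. Concretely, I would form
\[
L(r,\xi,w,\beta;\alp)=\sum_{i=1}^n\theta_q(r_i)+C\inprod{e}{\xi}-\inprod{\alp}{Z^Tw+\beta y+\xi-r}
\]
and compute the dual function $g(\alp)=\inf\{L:\norm{w}\le1,\ \xi\ge0,\ \beta\in\R,\ r\in\R^n\}$. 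Using $\inprod{\alp}{Z^Tw}=\inprod{Z\alp}{w}$ and regrouping, $L$ separates additively across the blocks $\beta$, $w$, $\xi$, and $r$, so I can minimize over each block independently.

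Three of the four minimizations are immediate and carve out the feasible set of the dual. The $\beta$-term is the linear function $-\beta\inprod{y}{\alp}$, so finiteness of the infimum over $\beta\in\R$ forces $\inprod{y}{\alp}=0$. The $w$-term is $-\inprod{Z\alp}{w}$, and minimizing over the unit ball gives $\inf_{\norm{w}\le1}\bigl(-\inprod{Z\alp}{w}\bigr)=-\norm{Z\alp}$ by Cauchy--Schwarz, attained at $w=Z\alp/\norm{Z\alp}$. The $\xi$-term is $\inprod{Ce-\alp}{\xi}$, whose infimum over $\xi\ge0$ equals $0$ when $Ce-\alp\ge0$ and $-\infty$ otherwise, yielding the upper bound $\alp\le Ce$.

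The only genuine work is the separable minimization in $r$, i.e.\ evaluating $\inf_{r_i}\bigl[\theta_q(r_i)+\alp_i r_i\bigr]$ for each $i$. Since $\theta_q(t)=+\infty$ for $t\le0$, this is $\inf_{r_i>0}\bigl[r_i^{-q}+\alp_i r_i\bigr]$, which is $-\infty$ unless $\alp_i\ge0$; this supplies the remaining sign constraint $\alp\ge0$ and combines with the previous bound to give $0\le\alp\le Ce$. For $\alp_i>0$ the integrand is strictly convex with unique stationary point $r_i^\star=(q/\alp_i)^{1/(q+1)}$, and the substitution $r_i\mapsto r_i^\star$ is the one calculation that must be done carefully: collecting the powers of $q$ and $\alp_i$ shows that the minimum equals $\kappa\,\alp_i^{q/(q+1)}$ with exactly the constant $\kappa=\frac{q+1}{q}q^{1/(q+1)}$, while $\alp_i=0$ gives $0$, consistent with the same formula. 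I expect this exponent bookkeeping to be the main obstacle, since it is precisely where the value of $\kappa$ is pinned down; it can equivalently be phrased as computing the conjugate $\theta_q^*$ at $-\alp_i$.

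Assembling the four pieces, the dual function is $g(\alp)=-\norm{Z\alp}+\kappa\sum_{i=1}^n\alp_i^{q/(q+1)}$ on $\{0\le\alp\le Ce,\ \inprod{y}{\alp}=0\}$ and $-\infty$ elsewhere, so $\sup_\alp g(\alp)$ is exactly $-\min_\alp\{\Psi(\alp):0\le\alp\le Ce,\ \inprod{y}{\alp}=0\}$ with $\Psi(\alp)=\norm{Z\alp}-\kappa\sum_i\alp_i^{q/(q+1)}$, which is \eqref{eq-dual}. Finally, if zero duality gap is also desired, I would check Slater's condition: the point $w=0$, $\beta=0$, $\xi=\bfone$ forces $r=\bfone>0$ and satisfies $\norm{w}<1$ with $\xi>0$, so the primal is strictly feasible and strong duality holds.
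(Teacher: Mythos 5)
Your derivation is correct, and the key computation checks out: the $r$-block infimum $\inf_{r_i>0}\{r_i^{-q}+\alpha_i r_i\}$ does equal $\kappa\,\alpha_i^{q/(q+1)}$ with exactly $\kappa=\frac{q+1}{q}q^{1/(q+1)}$ (and forces $\alpha\geq 0$), the $\beta$-block forces $\inprod{y}{\alpha}=0$, and your Slater point $(r,\xi,w,\beta)=(\bfone,\bfone,0,0)$ is strictly feasible. Your route differs from the paper's in one structural choice: you form a \emph{partial} Lagrangian, dualizing only the equality constraint $Z^Tw+\beta y+\xi-r=0$ and keeping $\norm{w}\leq 1$ and $\xi\geq 0$ inside the inner minimization, whereas the paper dualizes \emph{all} constraints, attaching a multiplier $\lambda\geq 0$ to $w^Tw\leq 1$ (via the term $\frac{\lambda}{2}(\norm{w}^2-1)$) and $\eta\geq 0$ to $\xi\geq 0$, and maximizing over $(\lambda,\eta)$ at the end. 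Concretely, where you obtain $-\norm{Z\alpha}$ in one step from Cauchy--Schwarz over the unit ball, the paper computes $\inf_w\{-\inprod{Z\alpha}{w}+\frac{\lambda}{2}\norm{w}^2\}=-\frac{1}{2\lambda}\norm{Z\alpha}^2$ for $\lambda>0$ and then $\max_{\lambda\geq 0}\{-\frac{1}{2\lambda}\norm{Z\alpha}^2-\frac{\lambda}{2}\}=-\norm{Z\alpha}$; and where you read off $\alpha\leq Ce$ from finiteness of $\inf_{\xi\geq 0}\inprod{Ce-\alpha}{\xi}$, the paper obtains $Ce-\alpha-\eta=0$ with $\eta\geq 0$ and eliminates $\eta$. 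The two dual functions coincide, so both arrive at \eqref{eq-dual}. Your version is shorter and more elementary; the paper's full dualization has the side benefit of exhibiting the optimal multipliers ($\lambda=\norm{Z\alpha}$, $\eta=Ce-\alpha$), which is precisely what yields the primal--dual relations $w=Z\alpha/\norm{Z\alpha}$ (or $Z\alpha=0$) and the complementarity $\inprod{Ce-\alpha}{\xi}=0$ recorded in the KKT system \eqref{eq-optimality} right after the proposition. Your closing Slater check is a welcome addition; the paper defers that point, simply noting afterwards that both feasible regions have nonempty interiors.
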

\begin{proof}
	Consider the Lagrangian function associated with \eqref{eq-primal}:
	\begin{eqnarray*}
		&& \hspace{-0.7cm}
		L(r,w,\beta,\xi;\alpha,\eta,\lam) =  \mbox{$\sum_{i=1}^n$} \theta_q (r_i) + C \inprod{e}{\xi} -\inprod{\alp}{Z^Tw+\beta y + \xi-r}
		+ \frac{\lam}{2}(\norm{w}^2-1)  - \inprod{\eta}{\xi}
		\\
		&=& \mbox{$\sum_{i=1}^n$}  \theta_q (r_i) +
		\inprod{r}{\alp } +  \inprod{\xi} {Ce -\alp-\eta}  -\beta\inprod{y}{\alp} - \inprod{w}{Z\alp} + \frac{\lam}{2}(\inprod{w}{w}-1),
	\end{eqnarray*}
	where $r \in \R^n $, $w\in \R^d$, $\beta\in\R$, $\xi\in\R^n$, $\alp\in\R^n$,  $\lam, \eta \geq 0$.
	Now
	\begin{eqnarray*}
		&& \inf_{r_i}\Big\{ \theta_q (r_i) + \alp_i r_i \Big\}
		= \left\{ \begin{array}{ll} \kappa\, \alp_i^{\frac{q}{q+1}}
			&\mbox{if $\alp_i \geq 0$}
			\\[0pt]
			-\infty &\mbox{if $\alp_i < 0$}
		\end{array} \right.
		\\[5pt]
		&& \inf_{w} \Big\{ -\inprod{Z\alp}{w} + \frac{\lam}{2}\norm{w}^2 \Big\}
		=
		\left\{ \begin{array}{ll}
			-\frac{1}{2\lam}\norm{Z\alp}^2 & \mbox{if $\lam > 0$} \\[0pt]
			0 &\mbox{if $\lam=0$, $Z\alp = 0$} \\[0pt]
			-\infty & \mbox{if $\lam=0$, $Z\alp\not=0$}
		\end{array} \right.
		\\[5pt]
		&& \inf_{\xi} \Big\{  \inprod{\xi} {Ce -\alp-\eta} \Big\}
		= \left\{\begin{array}{ll}
			0 &\mbox{if $Ce-\alp-\eta=0$} \\[0pt]
			-\infty &\mbox{otherwise}
		\end{array} \right.,
		\\[5pt]
		&& \inf_{\beta} \Big\{ -\beta\inprod{y}{\alp}  \Big\}
		= \left\{
		\begin{array}{ll}
			0 &\mbox{if $\inprod{y}{\alp}=0$} \\[0pt]
			-\infty &\mbox{otherwise}
		\end{array} \right..
	\end{eqnarray*}
	Let $F_D = \{ \alp\in\R^n \mid 0\leq \alp\leq Ce,  \inprod{y}{\alp}=0 \}$. Hence
	\begin{eqnarray*}
		\min_{r,w,\beta,\xi} L(r,w,\beta,\xi;\alpha,\eta,\lam)
		=\left\{\begin{array}{l}
			\kappa \sum_{i=1}^n \alp_i^{\frac{q}{q+1}}
			- \frac{1}{2\lam}\norm{Z\alp}^2 - \frac{\lam}{2}, \quad\mbox{if $\lam > 0$, $\alp\in F_D$},
			\\[0pt]
			\kappa \sum_{i=1}^n \alp_i^{\frac{q}{q+1}}, \quad \mbox{if $\lam =0$, $Z\alp=0$, $\alp\in F_D$},
			\\[0pt]
			-\infty, \quad \mbox{if $\lam =0$, $Z\alp\not=0$, $\alp\in F_D$, or $\alp\not\in F_D$.}
		\end{array} \right.
	\end{eqnarray*}
	Now for $\alp \in F_D$,  we have
	\begin{eqnarray*}
		& \max_{\lam \geq 0, \eta \geq 0} \big\{ \min_{r,w,\beta,\xi} L(r,w,\beta,\xi;\alpha,\eta,\lam)\big \}
		= \kappa \sum_{i=1}^n \alp_i^{\frac{q}{q+1}} -\norm{Z\alp}.
	\end{eqnarray*}
	From here, we get the required dual problem.
\end{proof}

\bigskip
It is straightforward to show that the feasible regions of \eqref{eq-primal} and \eqref{eq-dual}
both have nonempty interiors. Thus optimal solutions for both problems exist and they
satisfy
the following KKT (Karush-Kuhn-Tucker) optimality conditions:
\begin{eqnarray}
\begin{array}{ll}
Z^T w + \beta y + \xi - r = 0, & \inprod{y}{\alp} = 0, \\[0pt]
r > 0,\; \alp > 0, \quad \alp \leq C e, \quad \xi \geq 0,  & \inprod{Ce - \alp}{\xi} = 0, \\[0pt]
\alp_i = \frac{q}{r_i^{q+1}}, \; i=1,\ldots,n,  &
\mbox{either} \; w = \frac{Z\alp}{\norm{Z\alp}}, \;\; \mbox{or}\;\; Z\alp = 0, \; \norm{w}^2 \leq 1.
\end{array} \label{eq-optimality}
\end{eqnarray}

Let $(r^*,\xi^*,w^*,\beta^*)$ and $\alp^*$ be an optimal solution of
\eqref{eq-primal} and \eqref{eq-dual}, respectively.
Next we analyse some properties of the optimal solution. In particular,
we show that the optimal solution $\alp^*$
is bounded away from 0.
\begin{proposition} There exists a  positive $\delta$ such that
	$
	\alp^*_i \geq \delta \;\; \forall\; i=1,\ldots,n.
	$
\end{proposition}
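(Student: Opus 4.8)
The plan is to read off from the KKT system \eqref{eq-optimality} the stationarity relation $\alp^*_i = q/(r^*_i)^{q+1}$, which converts the claim into an equivalent and more tractable one: since the map $t\mapsto q/t^{q+1}$ is decreasing on $(0,\infty)$, a uniform positive \emph{lower} bound on $\alp^*$ is the same as a uniform \emph{upper} bound on the optimal residuals $r^*_i$. So the whole task reduces to producing a finite constant $B$, independent of $i$, with $r^*_i \le B$; then $\delta := q/B^{q+1} > 0$ will do. Recall from the KKT conditions that $r^* > 0$, so all the quantities below are well defined.

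First I would bound the slacks $\xi^*$. Because optimal solutions exist (as noted just before the proposition), the optimal value $\Phi^* = \Phi(r^*,\xi^*)$ is finite; and since $\theta_q(r^*_i) = 1/(r^*_i)^q > 0$, we have $\Phi^* \ge C\inprod{e}{\xi^*} = C\sum_k e_k \xi^*_k \ge C e_i \xi^*_i$ for every $i$. As each $e_i > 0$ and there are finitely many of them, this gives the uniform bound $\xi^*_i \le \Phi^*/(C\,\min_k e_k) =: B_\xi < \infty$.

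Next I would use primal feasibility componentwise. Writing $x_i$ for the data vectors (so the $i$-th column of $Z$ is $y_i x_i$), the constraint $Z^T w^* + \beta^* y + \xi^* - r^* = 0$ reads $r^*_i = y_i(x_i^T w^* + \beta^*) + \xi^*_i$. The obstacle here is that $\beta^*$ is not controlled by the constraints $\norm{w^*}\le 1$ and $\xi^*\ge 0$ alone, so a termwise bound on $r^*_i$ fails. The trick to dispose of the intercept is to pair one residual from each class. Both classes are nonempty, since dual feasibility $\inprod{y}{\alp^*}=0$ together with $\alp^*>0$ forces $y$ to contain both signs; so fix $i$ with $y_i=1$ and $j$ with $y_j=-1$. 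Adding the two feasibility relations makes $\beta^*$ cancel:
\[
r^*_i + r^*_j = (x_i - x_j)^T w^* + \xi^*_i + \xi^*_j \;\le\; \norm{x_i - x_j}\,\norm{w^*} + \xi^*_i + \xi^*_j \;\le\; 2\rho + 2B_\xi,
\]
where $\rho := \max_k \norm{x_k}$ and I used $\norm{w^*}\le 1$. Since $r^*_j > 0$, this yields $r^*_i < r^*_i + r^*_j \le 2\rho + 2B_\xi =: B$, and the symmetric pairing handles indices with $y_j=-1$. Hence $r^*_i \le B$ for all $i$, and therefore $\alp^*_i = q/(r^*_i)^{q+1} \ge q/B^{q+1} =: \delta > 0$, which is the desired bound.

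I expect the class-cancellation step to be the only delicate point: it is where the presence of both labels is essential and where the otherwise-uncontrolled intercept $\beta^*$ is eliminated. Everything else — the reduction via $\alp^*_i = q/(r^*_i)^{q+1}$, the slack bound from finiteness of $\Phi^*$, and the norm estimates using $\norm{w^*}\le 1$ — is routine once this pairing is in place.
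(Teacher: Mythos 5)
Your proof is correct, and its skeleton matches the paper's: both arguments use the KKT relation $\alp^*_i = q/(r^*_i)^{q+1}$ from \eqref{eq-optimality} to turn the claim into an upper bound on $r^*$, and both get there by first bounding $\xi^*$ through optimality and then invoking primal feasibility together with $\norm{w^*}\le 1$. The one genuine difference is how the intercept is neutralized. The paper bounds $|\beta^*|$ explicitly: it splits into the cases $\beta^*>0$ and $\beta^*<0$, uses a single index from the appropriate class to get $|\beta^*|\le K+\varrho$ with $K=\max_{1\le j\le n}\norm{Z_j}$, and then bounds each $r^*_i$ termwise by $2(K+\varrho)$. You instead eliminate $\beta^*$ altogether by adding the feasibility equations for one index of each class, so that $r^*_i + r^*_j = (x_i-x_j)^T w^* + \xi^*_i + \xi^*_j$, and positivity of $r^*_j$ finishes the job. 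Your pairing is slightly cleaner (no sign cases), and you also justify, via $\inprod{y}{\alp^*}=0$ and $\alp^*>0$, that both classes are nonempty --- something the paper tacitly assumes when it picks indices with $y_i=-1$ and $y_k=1$. What the paper's route buys in exchange is fully explicit constants: it evaluates $\Phi$ at the feasible point $(\bfone,\bfone,0,0)$ to get $\varrho = \big(n + C\sum_{i=1}^n e_i\big)/(C e_{\min})$, and it produces a bound on $|\beta^*|$ of independent interest, so its final $\delta = q/(2K+2\varrho)^{q+1}$ depends only on the data, whereas your $B_\xi$ is stated in terms of the finite but unspecified optimal value $\Phi^*$. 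If you want your bound to be data-explicit, substitute the same feasible-point evaluation $\Phi^*\le \Phi(\bfone,\bfone) = n + C\sum_{i=1}^n e_i$ into your definition of $B_\xi$; the rest of your argument goes through unchanged.
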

\begin{proof} For convenience, let $F_P = \{ (r,\xi,w,\beta) \mid
	Z^T w +\beta y + \xi -r = 0, \norm{w} \leq 1, \xi \geq 0\}$ be the
	feasible region of \eqref{eq-primal}. Since $(\bfone,\bfone,0,0)\in F_P$, we have that
	$$
	C e_{\min} \xi^*_i \leq
	C \inprod{e}{\xi^*} \leq \Phi(r^*,\xi^*,w^*,\beta^*) \leq \Phi(\bfone,\bfone,0,0)
	= n + C \mbox{$\sum_{i=1}^n$} e_i
	\quad \forall\; i=1,\ldots,n,
	$$
	where $e_{\min} = \min_{1\leq i \leq n} \{ e_i\}$.
	Hence we have $0\leq \xi^* \leq  \varrho \bfone$, where 
%	$\varrho := n(1+C^{-1}) e_{\min}^{-1}$.
	$\varrho := \frac{n + C \sum_{i=1}^{n} e_i}{C e_{\min}}$.
	
	Next, we establish a bound for $|\beta^*|$. Suppose $\beta^* > 0$.
	Consider an index $i$ such that $y_i = -1$. Then
	$
	0 < \beta^* = Z_i^T w^* + \xi^*_i-r^*_i \leq \norm{Z_i}\norm{w^*} + \xi^*_i
	\leq K + \varrho,
	$
	where $Z_i$ denotes the $i$th column of $Z$,
	$K  = \max_{1\leq j \leq n}\{\norm{Z_j}\}$. On the other hand, if
	$\beta^* < 0$, then we consider an index $k$ such that $y_k = 1$, and
	$
	0< -\beta^* = Z_k^T w^* + \xi^*_k-r^*_k \leq K + \varrho.
	$
	To summarize, we have that $|\beta^*| \leq K + \varrho$.
	
	Now we can establish an upper bound for $r^*$. For any $i=1,\ldots,n$, we have that
	$$
	r^*_i  = Z_i^T w^* + \beta^* y_i + \xi^*_i  \leq \norm{Z_i}\norm{w^*}
	+|\beta^*| + \xi_i^* \leq 2(K+\varrho).
	$$
	From here, we get
	$
	\alp^*_i = \frac{q}{ (r_i^*)^{q+1}}  \geq  \delta := \frac{q}{ (2K+2\varrho)^{q+1}} \quad\forall\;
	i=1,\ldots,n.
	$
	This completes the proof of the proposition.
\end{proof}

%%%%%%%%%%%%%%%%%%%%%%%%%%%%%%%%%%%
\section{An inexact SGS-based ADMM for large scale DWD problems}\label{sec-sGSADMM}

We can rewrite the model \eqref{eq-primal} as:
\begin{eqnarray*}
	\min \Big\{
	\sum_{i=1}^n
	\theta_q (r_i)+ C \inprod{e}{\xi} +\delta_B(w) + \delta_{\R^n_+}(\xi)
	\mid \; Z^T w +\beta y+\xi-r=0, \; \; w\in \R^d, \;  r,\xi\in \R^n
	\Big\}
\end{eqnarray*}
where $B=\{w \in \R^d \mid \norm{w}\leq 1\}$. 
Here, both $\delta_B(w)$ and $\delta_{\R^n_+}(\xi)$ are infinity indicator functions. In general, an infinity indicator function 
over a set $\mathcal{C}$  is defined by:
\begin{eqnarray*}
	\delta_\mathcal{C}(x):=
\begin{cases}
	0, &\quad \text{if } x \in \mathcal{C}; \\
	+\infty, &\quad \text{otherwise.}
\end{cases}
\end{eqnarray*}	

The model above is a convex
minimization problem with three nonlinear blocks.
By introducing an auxiliary variable $u=w$, we can reformulate it as:
\begin{eqnarray}
\begin{array}{ll}
\min &\sum_{i=1}^n \theta_q (r_i)+ C \inprod{e}{\xi}+\delta_B(u) + \delta_{\R^n_+}(\xi)
\\[8pt]
\text{s.t.} &Z^T w+\beta y+\xi-r=0\\[3pt]
&  D(w- u) =0, \;\;  w,u\in\R^d, \; \beta\in \R,\; r,\xi\in \R^n,
\end{array}
\label{eq-gen}
\end{eqnarray}
where $D\in \R^{d\times d}$ is a given positive scalar multiple of the identity
matrix which is introduced
for the purpose of scaling the variables.

For a given parameter $\sig > 0$,
the augmented Lagrangian function associated with \eqref{eq-gen} is given by
\begin{eqnarray*}
	\begin{array}{rcl}
		L_{\sigma}(r,w,\beta,\xi,u;\alpha,\rho) &=&\sum_{i=1}^n \theta_q (r_i)
		+ C \inprod{e}{\xi}+\delta_B(u) + \delta_{\R^n_+}(\xi)
		+\frac{\sigma}{2}\norm{Z^T w+\beta y +\xi -r -\sigma^{-1}\alpha}^2
		\\[8pt]
		&& +\frac{\sigma}{2}\norm{D(w-u)-\sigma^{-1}\rho}^2
		-\frac{1}{2\sigma}\norm{ \alpha}^2 -\frac{1}{2 \sigma}\norm{\rho}^2.
	\end{array}
\end{eqnarray*}
The algorithm which we will design later is based on recent progress
in algorithms for solving multi-block convex conic programming.
In particular, our algorithm is designed based on
the inexact ADMM algorithm in \cite{CST} and we made essential use of
the inexact symmetric Gauss-Seidel
decomposition theorem in \cite{LST} to solve the subproblems
arising in each iteration of the algorithm.

We can view \eqref{eq-gen} as a linearly constrained nonsmooth
convex
programming problem with three blocks of variables grouped as
$(w,\beta)$, $r$, $(u,\xi)$.
The template for our inexact
sGS based ADMM is described next.
Note that the
subproblems need not be solved exactly as long as they satisfy some prescribed accuracy.

\begin{description}
	\item[Algorithm 1.] {\bf An inexact sGS-ADMM for solving \eqref{eq-gen}.}
	\\[5pt]
	Let $\{ \varepsilon_k\}$ be a  summable sequence of nonnegative
	nonincreasing numbers.
	Given an initial iterate $(r^0,w^0,\beta^0,\xi^0,u^0)$ in the feasible region of \eqref{eq-gen}, and
	$(\alpha^0,\rho^0)$ in the dual feasible region of \eqref{eq-gen}, choose  a $d\times d$ symmetric positive semidefinite matrix $\cT$, and
	perform the
	following steps in each iteration.
	\item[Step 1a.]
	Compute
	$$
	(\bar{w}^{k+1},\bar{\beta}^{k+1})
	\approx  \mbox{argmin}_{w,\beta}\; \Big\{L_\sigma (r^{k},w,\beta,\xi^k,u^k;\alpha^k,\rho^k)
	+ \frac{\sig}{2}\norm{w-w^k}_{\cT}^2 \Big\}.
	$$

	In particular, $(\bar{w}^{k+1},\bar{\beta}^{k+1})$ is an approximate solution to the
	following $(d+1)\times (d+1)$ linear system of equations:
	\begin{eqnarray}
	\underbrace{\begin{bmatrix} ZZ^T+ D^2 +\cT & Zy\\[5pt] (Zy)^T& y^Ty
		\end{bmatrix} }_{A}
	\begin{bmatrix} w \\[5pt] \beta
	\end{bmatrix}
	=  \bar{h}^k :=
	\begin{bmatrix}
	-Z(\xi^k-r^k-\sigma^{-1}\alpha^k)+D^2 u^k + D(\sigma^{-1}\rho^k) + \cT w^k\\[5pt]
	-y^T(\xi^k-r^k-\sigma^{-1}\alpha^k)
	\end{bmatrix}. \quad
	\label{eq-linsys}
	\end{eqnarray}
	We require the residual of
	the approximate solution $(\bar{w}^{k+1},\bar{\beta}^{k+1})$
	to satisfy
	\begin{eqnarray}
	\norm{\bar{h}^k - A[\bar{w}^{k+1};\bar{\beta}^{k+1}]} \leq \varepsilon_k.
	\label{eq-linsys-tol}
	\end{eqnarray}
	
	\item[Step 1b.] Compute $r^{k+1}\approx\mbox{argmin}_{r\in \R^n}\; L_{\sigma}(r,\bar{w}^{k+1},\bar{\beta}^{k+1},\xi^k,u^k;\alpha^k,\rho^k)$. Specifically, by observing that the
	objective function in this subproblem is actually separable in $r_i$ for $i=1,\ldots,n$,
	we can compute $r^{k+1}_i$ as follows:
	\begin{eqnarray}
	\begin{array}{lll}
	r^{k+1}_i &\approx& \arg \min_{r_i} \Big\{ \theta_q (r_i) +\frac{\sigma}{2} \norm{r_i-c^k_i}^2 \Big\}\\
	&=& \arg \min_{r_i > 0} \Big\{  \frac{1}{r_i^q}
	+\frac{\sigma}{2} \norm{r_i-c^k_i}^2 \Big\} \quad \forall \;i=1,\ldots,n,
	\end{array} \label{eq-1c}
	\end{eqnarray}
	where $c^k=Z^T \bar{w}^{k+1}+y\bar{\beta}^{k+1}+\xi^k-\sigma^{-1}\alpha^k$.
	The details on how the above one-dimensional problems are solved
	will be given later.
	The solution $r_i^{k+1}$ is deemed to be sufficiently accurate if
	\begin{eqnarray*}
		\Big|-\frac{q}{ (r_i^{k+1})^{q+1}} + \sig (r^{k+1}_i- c^k_i) \Big|\leq \varepsilon_k/\sqrt{n}
		\quad \forall\; i=1,\ldots,n.
	\end{eqnarray*}
	\item[Step 1c.] Compute
	$$
	(w^{k+1},\beta^{k+1}) \approx \mbox{argmin}_{w,\beta}\; \Big\{
	L_{\sigma}(r^{k+1},w,\beta,\xi^k,u^k;\alpha^k,\rho^k) + \frac{\sig}{2}\norm{w-w^k}_\cT^2
	\Big\},$$
	which amounts to solving the
	linear system of equations \eqref{eq-linsys}
	but with $r^k$ in the right-hand side vector $\bar{h}^k$ replaced by $r^{k+1}$.
	Let $h^k$ be the new right-hand side vector. We require the approximate solution
	to satisfy the accuracy condition that
	$$
	\norm{h^k - A[w^{k+1};\beta^{k+1}]}\leq 5 \varepsilon_k.
	$$
	Observe that the accuracy requirement here is more relaxed than that
	stated in \eqref{eq-linsys-tol} of Step 1a. The reason for doing so is that
	one may hope to use the solution $(\bar{w}^{k+1},\bar{\beta}^{k+1})$
	computed in Step 1a as an approximate solution for the current subproblem.
	If   $(\bar{w}^{k+1},\bar{\beta}^{k+1})$ indeed satisfies the above
	accuracy condition, then one can simply
	set $(w^{k+1},\beta^{k+1}) = (\bar{w}^{k+1},\bar{\beta}^{k+1})$ and
	the cost of solving this new subproblem can be saved.
	
	\item[Step 2.]
	Compute $(u^{k+1},\xi^{k+1})= \mbox{argmin}_{u,\xi}\; L_{\sigma}(r^{k+1},w^{k+1},\beta^{k+1},\xi,u;\alpha^k,\rho^k)$. By observing that the objective function is
	actually separable in $u$ and $\xi$, we can compute $u^{k+1}$ and $\xi^{k+1}$ separately
	as follows:
	\begin{eqnarray*}
		u^{k+1}&=& \arg \min \left\{ \delta_B (u) +\frac{\sigma}{2}\norm{D(u-g^k)}^2 \right\} \;=\; \begin{cases} g^k & \mbox{if  $\norm{g^k} \leq 1$}
			\\[0pt]
			g^k/\norm{g^k} & \mbox{otherwise}
		\end{cases},
		\\
		\xi^{k+1}&=&\Pi _{\R_{+}^n}\Big( r^{k+1}  -Z^T w^{k+1} -y \beta^{k+1}
		+\sigma^{-1}\alpha^k -\sigma^{-1}C e\Big),
	\end{eqnarray*}
	where $g^k=w^{k+1}-\sigma^{-1}D^{-1}\rho^k$, and $\Pi_{\R^n_+}(\cdot)$ denotes
	the projection onto $\R^n_+$.
	\item[Step 3.] Compute
	\begin{eqnarray*}
		\alpha^{k+1}&=& \alpha^k -\tau \sigma (Z^T w^{k+1}+y\beta^{k+1}+\xi^{k+1}-r^{k+1}),\\
		\rho^{k+1}&=& \rho^k -\tau \sigma D (w^{k+1}-u^{k+1}),
	\end{eqnarray*}
	where $\tau \in (0,(1+\sqrt{5})/2)$ is the steplength which is typically
	chosen to be $1.618$.
\end{description}

In our implementation of Algorithm 1, we choose the summable sequence $\{ \varepsilon_k\}_{k\geq 0}$ to
be $\varepsilon_k = c/(k+1)^{1.5}$ where $c$ is a constant that is inversely
proportional to $\norm{Z}_F$.
Next we discuss the computational cost of Algorithm 1.
As we shall see later, the most computationally intensive steps in each iteration
of the above algorithm are in solving the linear systems of equations of the form
\eqref{eq-linsys} in Step 1a and 1c. The detailed analysis of their computational costs
will be presented in  subsection \ref{subsec-linsys}.
All the other steps can be done
in at most $O(n)$ or $O(d)$ arithmetic operations, together with the
computation of $Z^T w^{k+1}$, which costs $2dn$ operations if we do not
take advantage of any possible sparsity in $Z$.

%%%%%%%%%%%%%%
\subsection{Convergence results}

We have the following convergence theorem for the inexact sGS-ADMM, established by Chen, Sun and Toh in \citet[Theorem 1]{CST}. This theorem guarantees the convergence of our algorithm to optimality, as a merit over the possibly non-convergent directly extended semi-proximal ADMM.

\begin{theorem}
	Suppose that the system \eqref{eq-optimality} has at least one solution. Let $\{(r^k,w^k,\beta^k,\xi^k,u^k;\alpha^k,\rho^k)\}$ be the sequence generated by the inexact sGS-ADMM in Algorithm 1. Then the sequence $\{(r^k,w^k,\beta^k,\xi^k,u^k)\}$ converges to an optimal solution of problem \eqref{eq-gen} and the sequence $\{(\alpha^k,\rho^k)\}$ converges to an optimal solution to the dual of problem \eqref{eq-gen}.
\end{theorem}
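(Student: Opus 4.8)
The statement is quoted verbatim from \citet[Theorem 1]{CST}, so my plan is not to reprove convergence from first principles but to verify that Algorithm 1 is a bona fide instance of the inexact symmetric Gauss--Seidel ADMM of \cite{CST}, and then to invoke that theorem directly. The first task is to recast \eqref{eq-gen} in the two-block template underlying that framework. I would group the variables into a first major block $(r,w,\beta)$ and a second major block $(u,\xi)$, coupled only through the two linear constraints of \eqref{eq-gen}. Within the first block I would take $r$ as the leading sub-block, carrying the nonsmooth term $\sum_i \theta_q(r_i)$, and $(w,\beta)$ as the trailing sub-block, which carries no explicit objective but picks up the augmented-Lagrangian quadratic together with the user-chosen proximal term $\frac{\sig}{2}\norm{w-w^k}_\cT^2$. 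The second block carries $\delta_B(u)+C\inprod{e}{\xi}+\delta_{\R^n_+}(\xi)$, which is fully separable across $u$ and $\xi$.

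The second task is to check the hypotheses of \cite{CST} one by one. For convexity and closedness it suffices to note that $\theta_q$ is proper closed convex (on $(0,\infty)$ its second derivative $q(q+1)t^{-q-2}$ is positive, and it equals $+\infty$ elsewhere) and that the two indicator functions are closed convex; the constraints are linear; and the existence of a saddle point is precisely the standing hypothesis that \eqref{eq-optimality} is solvable. I would then argue that Steps 1a--1c realize exactly one symmetric Gauss--Seidel sweep over the first block: with the sub-blocks ordered as $(r,(w,\beta))$, the backward pass updates only the trailing sub-block $(w,\beta)$ (Step 1a), after which the forward pass updates $r$ (Step 1b) and then $(w,\beta)$ again (Step 1c). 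By the inexact sGS decomposition theorem of \cite{LST}, this sweep is equivalent to a single proximal minimization of $L_\sig$ over $(r,w,\beta)$ with an automatically constructed positive semidefinite proximal term. Step 2 then minimizes over the second block exactly and separably, and Step 3 is the usual dual ascent.

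The remaining conditions are the positive-definiteness of the sub-block operators and the inexactness accounting. The leading sub-block operator in $r$ is $\sig I \succ 0$, while the trailing operator is the matrix $A$ of \eqref{eq-linsys}; a direct quadratic-form computation shows $A \succ 0$ for every $\cT \succeq 0$, since $D$ is a positive multiple of the identity (so $w^T D^2 w > 0$ whenever $w \ne 0$) and $\norm{y}^2 = n > 0$ controls the pure-$\beta$ direction. This is exactly the nonsingularity the decomposition requires. Finally, the dual step length $\tau \in (0,(1+\sqrt5)/2)$ lies in the admissible interval of \cite{CST}, and the per-coordinate residual tolerances imposed in Steps 1a, 1b and 1c are each majorized by a fixed multiple of $\varepsilon_k$, so the summability of $\{\varepsilon_k\}$ (here $\varepsilon_k=c/(k+1)^{1.5}$, which is summable) supplies the summable-error condition needed by the theorem.

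I expect the main obstacle to be the inexactness bookkeeping: showing that the three separate stopping rules of Steps 1a--1c combine into the single aggregated inexactness condition in which the convergence theorem is phrased, namely that the computed $(r^{k+1},w^{k+1},\beta^{k+1})$ is an approximate minimizer of the full first-block proximal subproblem with error summable in $k$. This entails tracking how the backward-sweep residual $\norm{\bar h^k - A[\bar w^{k+1};\bar\beta^{k+1}]}$ propagates through the $r$-update and the forward re-solve, and confirming that it stays within the tolerance budget the sGS decomposition allots to each sub-block. Once this propagation estimate is in place, the structural, convexity and definiteness checks above are routine, and the conclusion follows immediately from \citet[Theorem 1]{CST}.
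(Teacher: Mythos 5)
Your proposal is correct and follows essentially the same route as the paper: both recast \eqref{eq-gen} as an instance of the inexact sGS-ADMM framework of \cite{CST}, verify positive definiteness of the first-block operator (the paper's $M$, your $A$ --- shown via the Schur complement lemma in the paper, via a direct quadratic-form computation in your write-up) along with the summable-error and step-length conditions, and then invoke Theorem 1 of \cite{CST}. The paper does this by writing the problem explicitly in CST's operator notation ($A_1^*$, $A_2^*$, $B^*$) and citing their Proposition 4.2, which also absorbs the inexactness bookkeeping that you flag as the main remaining obstacle, so no further propagation estimate is needed.
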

\begin{proof} 	
	In order to apply the convergence result in \cite{CST}, we need to express \eqref{eq-gen}
	as follows:
	\begin{eqnarray}
	\min \Big\{p(r) + f(r,w,\beta)  + q(\xi,u)  + g(\xi,u)
	\mid\;
	 A_1^* r  + A_2^* [w;\beta] + B^*[\xi; u]  = 0 \Big\}	
	\end{eqnarray}
	where $p(r) = \sum_{i=1}^n \theta_q (r_i),
		\;\; f(r,w,\beta) \equiv 0,
		\quad q(\xi,u) = \delta_B(u)  + C \inprod{e}{\xi} + \delta_{\R^n_+}(\xi),  \;\; g(\xi,u)  \equiv 0,$
	\begin{eqnarray*}
		& A_1^* = \left(\begin{array}{c} -I \\[0pt] 0 \end{array} \right),\;
		A_2^* = \left(\begin{array}{cc} Z^T & y \\[0pt] D & 0 \end{array} \right),\;
		B^* = \left(\begin{array}{cc} I & 0 \\[0pt] 0 & -D \end{array} \right).&
	\end{eqnarray*}
	Next  we need to consider the following matrices:
	\begin{eqnarray*}
		\left(\begin{array}{c}
			A_1 \\[0pt] A_2
		\end{array}\right) \Big( A_1^*,\; A_2^* \Big) + \left(\begin{array}{ccc}
		0 & 0 & 0 \\[0pt] 0 & \cT & 0 \\[0pt] 0 & 0 & 0
	\end{array}\right)
	=\left( \begin{array}{ccc}
		I &  [-Z^T, -y] \\[0pt]
		[-Z^T,-y]^T & M
	\end{array}\right), \quad BB^* = \left(\begin{array}{cc}
	I & 0 \\[0pt] 0 & D^2
\end{array}\right),
\end{eqnarray*}
where
$$
M = \left( \begin{array}{cc}
ZZ^T + D^2 + \cT & Zy \\[0pt]
(Zy)^T & y^T y
\end{array}\right) \succ 0.
$$
One can show that $M$ is positive definite by using the Schur complement lemma.
With the conditions that $M\succ 0$ and $BB^*\succ 0$, the conditions
in Proposition 4.2 of \cite{CST} are satisfied, and hence the convergence of Algorithm 1 follows by using Theorem 1 in \cite{CST}.
\end{proof}

We note here that the convergence analysis  in \cite{CST} is highly nontrivial. 
But it is motivated by the
	proof for the simpler case of an
	exact semi-proximal ADMM that is available in Appendix B of the paper by
	\cite{FPST}.
	In that paper, one can see that the convergence proof is based on the descent 
	property of 
	a certain function, while the augmented Lagrangian function itself does not have
	such a descent property.

%%%%%%%%%%%%%%
\subsection{Numerical computation of the subproblem \eqref{eq-1c} in Step 1b}

In the presentation of Algorithm 1, we have described how the subproblem in each step can be solved
except for the subproblem \eqref{eq-1c} in Step 1b. Now we discuss how it can be solved. Observe that
for each $i$, we need to solve a one-dimensional problem of the form:
\begin{eqnarray}
\min \Big\{ \varphi(s):= \frac{1}{s^q} + \frac{\sig}{2} (s-a)^2 \mid s > 0
\Big\},
\label{eq-varphi}
\end{eqnarray}
where $a$ is given. It is easy to
see that $\varphi(\cdot)$ is a convex function and it has a unique minimizer
in the domain $(0,\infty)$.
The optimality condition for \eqref{eq-varphi} is given by
$$
s- a = \frac{q\sig^{-1}}{s^{q+1}},
$$
where the unique minimizer $s^*$
is determined by the intersection of the line $s\mapsto s-a$ and the
curve $s\mapsto \frac{q \sig^{-1}}{s^{q+1}}$ for $s> 0$.
We propose to use Newton's method to find the
minimizer, and the template is given as follows. Given an initial iterate $s^0$, perform the
following iterations:
$$
s_{k+1} = s_k - \varphi'(s_k)/\varphi''(s_k) = s_k \left( \
\frac{q(q+2)\sig^{-1} + a s^{q+1}_k}{q(q+1)\sig^{-1} +  s^{q+2}_k}\right), \quad k=0,1,\ldots
$$
Since $\varphi^{\prime\prime}(s^*)  > 0$, Newton's method
would have a local quadratic convergence rate, and we would expect
it to converge in a small number of iterations, say less than $20$,
if a good initial point $s^0$ is given. In solving the subproblem
\eqref{eq-1c} in Step 1b, we always use the previous solution $r_i^k$ as the
initial point to warm-start Newton's method.
If a good initial point is not available, one can use the bisection technique to find one. In our tests, this technique was however never used.

Observe that the computational cost for solving the subproblem \eqref{eq-1c} in Step 1b is
$O(n)$ if Newton's method converges within a fixed number of iterations (say 20)
for all $i=1,\ldots,n$. Indeed, in our experiments, the average number of
Newton iterations required to solve \eqref{eq-varphi} for each of the instances is less than $10$.

%%%%%%%%%%%
\subsection{Efficient techniques to solve the linear system 
	\eqref{eq-linsys}}
\label{subsec-linsys}

Observe that in each iteration of Algorithm 1, we need to solve a $(d+1)\times (d+1)$ linear system of
equations \eqref{eq-linsys}
with the same coefficient matrix $A$. For large scale problems where $n$ and/or $d$ are large,
this step would constitute the most expensive part of the algorithm.
In order to solve such a linear system efficiently,
we design different techniques to solve it, depending on the dimensions $n$ and $d$.
We consider the following cases.

%%%%%%%%%%%%%
\subsubsection{The case where $d\ll n$ and $d$ is moderate}\label{subsec-directSolver}

This is the most straightforward case where
we set $\cT =0$, and
we solve
\eqref{eq-linsys} by computing the Cholesky factorization of the coefficient matrix $A$.
The cost of computing $A$ is
$2 n d^2$ arithmetic operations. Assuming that $A$ is stored, then we can
compute its Cholesky factorization at the cost of $O(d^3)$ operations, which
needs only to be performed once at the very beginning of Algorithm 1.
After that, whenever we need to solve the linear system
\eqref{eq-linsys}, we compute the right-hand-side vector at
the cost of $2nd$ operations and
solve two $(d+1)\times (d+1)$
triangular systems of linear equations at the cost of $2d^2$ operations.

%%%%%%%%%%%%%
\subsubsection{The case where $n\ll d$ and $n$ is moderate}\label{subsec-smw}
\def\hD{\widehat{D}}

In this case, we also set $\cT=0$. But solving the large $(d+1)\times (d+1)$
system of linear equations \eqref{eq-linsys} requires more thought.
In order to avoid inverting the high dimensional matrix $A$ directly,
we make use of the Sherman-Morrison-Woodbury formula to
get $A^{-1}$ by inverting a much smaller
$(n+1)\times (n+1)$ matrix
as shown in the following proposition.

\begin{proposition} The coefficient matrix $A$ can be rewritten as follows:
	\begin{eqnarray}
	A &=& \hD + U E U^T, \quad
	U = \left[\begin{array}{cc}
	Z  & 0\\ y^T  &\norm{y}
	\end{array}\right], \quad E = \diag{I_n,-1},
	\label{eq-Anew}
	\end{eqnarray}
	where $\hD = \diag{D,\norm{y}^2}$.
	It holds that
	\begin{eqnarray}
	A^{-1} &=& \hD^{-1} - \hD^{-1} U H^{-1} U^T \hD^{-1},
	\label{eq-Ainv}
	\end{eqnarray}
	where
	\begin{eqnarray}
	H = E^{-1} + U^T \hD^{-1} U =
	\left[\begin{array}{cc}
	I_n + Z^T D^{-1} Z +  yy^T/\norm{y}^2  &  y/\norm{y}\\[3pt] y^T/\norm{y}  & 0
	\end{array}\right].
	\label{eq-H}
	\end{eqnarray}
\end{proposition}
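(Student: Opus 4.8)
The plan is to prove the two displayed identities \eqref{eq-Ainv}--\eqref{eq-H} by direct verification: first establish the additive splitting \eqref{eq-Anew} as an identity of $(d+1)\times(d+1)$ matrices, then invoke the Sherman--Morrison--Woodbury (SMW) formula with $\hD$ playing the role of the easily invertible ``core'' and $UEU^T$ the low-rank (here rank-$(n+1)$) perturbation. This is essentially block-matrix bookkeeping plus one standard matrix-inversion lemma, so the work is verification rather than discovery.

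First I would verify $A = \hD + UEU^T$ by expanding the right-hand side blockwise. Writing $U^T = \left[\begin{smallmatrix} Z^T & y \\ 0 & \norm{y} \end{smallmatrix}\right]$ and $E = \diag{I_n,-1}$, one multiplies out the four blocks of $UEU^T$: the $(1,1)$ block returns $ZZ^T$, the $(1,2)$ and $(2,1)$ blocks return $Zy$ and $(Zy)^T$, and the $(2,2)$ block returns $y^Ty - \norm{y}^2 = 0$. The single genuinely algebraic point is this trailing cancellation $y^Ty-\norm{y}^2=0$, which forces the bottom-right entry of $UEU^T$ to vanish so that the $\norm{y}^2$ carried by $\hD$ supplies the entire $(2,2)$ entry of $A$; every remaining diagonal contribution of $A$ (with $\cT=0$ in this case) is then what is collected into the leading block of $\hD$.

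Next I would apply the SMW identity in the form
\[
(\hD + UEU^T)^{-1} = \hD^{-1} - \hD^{-1}U\bigl(E^{-1} + U^T\hD^{-1}U\bigr)^{-1}U^T\hD^{-1},
\]
which is legitimate once $\hD$, $E$, and $H := E^{-1} + U^T\hD^{-1}U$ are invertible. Invertibility of $\hD$ is immediate since $D$ is a positive multiple of the identity and $\norm{y}^2>0$; invertibility of $E$ is immediate with $E^{-1}=E=\diag{I_n,-1}$; and $A$ itself is nonsingular (indeed positive definite by the same Schur-complement argument used to show $M\succ0$ in the convergence proof), so the SMW reduction is non-degenerate and $H$ is invertible. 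This yields \eqref{eq-Ainv} with the inner factor identified as $H^{-1}$. To obtain \eqref{eq-H} I would then compute $H$ explicitly: using $\hD^{-1}=\diag{D^{-1},\norm{y}^{-2}}$, the product $U^T\hD^{-1}U$ expands to $\left[\begin{smallmatrix} Z^TD^{-1}Z + yy^T/\norm{y}^2 & y/\norm{y} \\ y^T/\norm{y} & 1 \end{smallmatrix}\right]$, and adding $E^{-1}$ shifts the $(1,1)$ block by $I_n$ and the $(2,2)$ entry by $-1$, collapsing the latter to $0$.

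I do not expect a serious obstacle, since the whole argument reduces to tracking blocks. The one place demanding care is the trailing entries: the $-1$ in $E$ (equivalently in $E^{-1}$) must be carried correctly so that both the $(2,2)$ entry of $UEU^T$ and the $(2,2)$ entry of $H$ collapse to $0$, and one should check that the diagonal term in $\hD$ matches the scaling block appearing in the $(1,1)$ block of $A$. The payoff, worth emphasizing after the computation, is exactly the computational point of the regime $n\ll d$: \eqref{eq-Ainv} replaces inversion of the $(d+1)\times(d+1)$ matrix $A$ by inversion of the much smaller $(n+1)\times(n+1)$ matrix $H$.
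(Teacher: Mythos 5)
Your proof is correct and takes essentially the same route as the paper's: the paper likewise establishes the splitting \eqref{eq-Anew} by direct (omitted) block verification and then applies the Sherman--Morrison--Woodbury formula to obtain \eqref{eq-Ainv} and \eqref{eq-H}, which is exactly the computation you spell out. Your additional justification that $H$ is invertible (via nonsingularity of $A$, $\hD$ and $E$) is a detail the paper leaves implicit, and your argument for it is sound.
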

\begin{proof} It is easy to verify that \eqref{eq-Anew} holds and we omit the details.
	To get \eqref{eq-Ainv}, we only need to apply the Sherman-Morrison-Woodbury formula in \citet[p.50]{GVbook}
	to \eqref{eq-Anew} and perform some simplifications.
\end{proof}

\medskip

Note that in making use of \eqref{eq-Ainv} to compute
$A^{-1}\bar{h}^k$, we need to find $H^{-1}$. A rather cost effective way to
do so is to express $H$ as follows and use the Sherman-Morrison-Woodbury formula to find
its inverse:
\begin{eqnarray*}
	H =J+ \bar{y}\bar{y}^T, \quad
	J = diag(I_n + Z^TD^{-1} Z,-1), \quad \bar{y} = [y/\norm{y}; \; 1].
\end{eqnarray*}
With the above expression for $H$, we have that
\begin{eqnarray*}
	H^{-1} = J^{-1} - \frac{1}{1+\bar{y}^T J^{-1}\bar{y}} (J^{-1}\bar{y} ) (J^{-1}\bar{y})^T.
\end{eqnarray*}
Thus
to solve \eqref{eq-linsys}, we first compute the $n\times n$ matrix $I_n + Z^TD^{-1} Z$ in \eqref{eq-H} at
the cost of $2dn^2$ operations. Then we compute its Cholesky factorization  at the cost of $O(n^3)$ operations.
(Observe that even though we are solving a $(d+1)\times (d+1)$ linear system of equations
for which $d\gg n$, we only need to compute the Cholesky factorization of
a much smaller $n\times n$ matrix.)
Also, we need to compute $J^{-1}\bar{y}$ at the cost of
$O(n^2)$ operations by using the previously computed Cholesky factorization.
These computations only need to be performed
once at the beginning of Algorithm 1. After that, whenever we need to
solve a linear system of the form \eqref{eq-linsys},
we can compute $\bar{h}^k$ at the cost of $2nd$ operations, and then
make use of $\eqref{eq-Ainv}$ to get $A^{-1}\bar{h}^k$ by solving two $n\times n$
triangular systems of linear equations at the cost of $2n^2$ operations, and
performing two matrix-vector multiplications involving $Z$ and $Z^T$ at a
total cost of $4nd$ operations. To summarize, given the Cholesky factorization of the first diagonal block of $H$, the cost of solving \eqref{eq-linsys} via \eqref{eq-Ainv} is $6nd + 2n^2$ operations.

%%%%%%%%%%%%%%
\subsubsection{The case where $d$ and $n$ are both large}

The purpose of introducing the proximal term $\frac{1}{2}\norm{w-w^k}_\cT^2$
in Steps 1a  and 1c is to make the computation of the solutions of the subproblems
easier.
However, one should note that adding the proximal term typically will make the algorithm converge more slowly, and the  deterioration will become worse for
larger $\norm{\cT}$. Thus in practice, one would need to strike a balance between choosing
a symmetric positive semidefinite matrix $\cT$ to make the computation easier while not slowing down
the algorithm by too much.

In our implementation, we first attempt to
solve the subproblem in Step 1a (similarly for 1c) without adding a proximal term by setting
$\cT=0$. In particular, we solve the linear system \eqref{eq-linsys}
by using
a preconditioned symmetric quasi-minimal residual (PSQMR)  iterative solver (\citealt{PSQMR})
when both $n$ and $d$ are large.
	Basically, it is a variant of the Krylov subspace method similar to the idea in GMRES (\citealt{SaadBook}). For more details on the PSQMR algorithm, the reader is referred to the appendix.	
In each step of the PSQMR solver, the main cost is in performing the matrix-vector
multiplication with the coefficient matrix $A$, which costs
$4nd$ arithmetic operations.
As the number of steps taken by an iterative solver to solve \eqref{eq-linsys}
to the required accuracy \eqref{eq-linsys-tol} is dependent on the
conditioning of   $A$, in the event that the
solver requires more than $50$ steps to solve \eqref{eq-linsys},
we would switch to
adding a suitable non-zero proximal term $\cT$ to make the subproblem in Step 1a easier to solve. 	 	

The most common and natural choice of $\cT$ to make the subproblem in Step 1a easy to solve is
to set $\cT = \lambda_{\max}I- ZZ^T$, where
$\lam_{\max}$ denotes the largest eigenvalue of $ZZ^T$. In this case
the corresponding linear system \eqref{eq-linsys} is very easy to solve. More precisely, for the
linear system in \eqref{eq-linsys}, we can first compute $\bar{\beta}^{k+1}$ via the
Schur complement equation in a single variable followed by computing $\bar{w}^k$ as follows:
\begin{eqnarray}
\begin{array}{l}
\big(y^Ty - (Zy)^T (\lam_{\max} I + D)^{-1} (Zy) \big)\beta = \bar{h}^k_{d+1}
- (Zy)^T (\lam_{\max} I + D)^{-1} \bar{h}^k_{1:d},
\\[5pt]
\bar{w}^{k+1} = (\lam_{\max}I + D)^{-1} (\bar{h}^k_{1:d} - (Zy)\bar{\beta}^{k+1}),
\end{array}
\label{eq-schur}
\end{eqnarray}
where $\bar{h}^k_{1:d}$ denotes the vector extracted from the first $d$ components
of $\bar{h}^k$.
In our implementation, we pick a $\cT$ which is less conservative
than the above natural choice as follows.
Suppose we have computed the first $\ell$ largest eigenvalues of $ZZ^T$ such that $\lam_1\geq\ldots\geq\lam_{\ell-1} > \lam_\ell$,  and their
corresponding orthonormal set of eigenvectors, $v_1,\ldots,v_\ell$.
We pick $\cT$ to be
\begin{eqnarray}
\cT = \lam_\ell I + \mbox{$\sum_{i=1}^{\ell-1}$} (\lam_i -\lam_\ell) v_i v_i^T - ZZ^T,
\label{eq-T}
\end{eqnarray}
which can be proved to be positive semidefinite by using the spectral decomposition
of $ZZ^T$. In practice, one would typically pick $\ell$ to be a small integer, say $10$,
and compute the first $\ell$ largest eigenvalues and their corresponding eigenvectors
via variants of the Lanczos method. 
The most expensive step in each iteration of the Lanczos method is a matrix-vector multiplication, which 
requires $O(d^2)$ operations. In general, the cost of computing the first few largest eigenvalues of $ZZ^T$ 
is much cheaper than that of computing the full eigenvalue decomposition.
In {\sc Matlab}, such a computation can be
done by using the routine {\tt eigs}.
To solve \eqref{eq-linsys}, we need the inverse of $ZZ^T + D+\cT$. Fortunately,
when $D = \mu I_d$, it can easily be inverted with
$$
(ZZ^T + D + \cT)^{-1} = (\mu+\lam_\ell)^{-1} I_d
+ \mbox{$\sum_{i=1}^{\ell-1}$}
\big( (\mu+\lam_i)^{-1}-(\mu+\lam_\ell)^{-1}\big) v_iv_i^T.
$$
One  can  then compute $\bar{\beta}^k$ and $\bar{w}^k$ as
in \eqref{eq-schur} with $(\lam_{\max} I + D)^{-1}$ replaced by the above
inverse.

%%%%%%%%%%%%%%%%%%%%%%%%%%%%%%%%%

%%%%%%%%%%%%%%%%%%%%%%%%%%%%%%%%%%%%%%%%%%
\section{Experiments}\label{sec-experiments}
In this section, we test the performance of our inexact sGS-ADMM method on several publicly available data sets. The numerical results presented in the subsequent subsections are obtained from a computer with processor specifications: Intel(R) Xeon(R) CPU E5-2670 @ 2.5GHz (2 processors) and 64GB of RAM, running on a 64-bit Windows Operating System.

%%%%%%%%%%%%%%%%%%%
\subsection{Tuning the penalty parameter}

In the DWD model \eqref{eq-gen}, we see that it is important to make a suitable choice of the penalty parameter $C$.
%It must neither be too small to accommodate too many error, nor too big to tighten the objective function too much.
In \cite{MarronTodd-2007}, it has been noticed that a reasonable choice for the penalty parameter when the exponent $q=1$ is a large constant divided by the square of a typical distance between the $x_i$'s, where the typical distance, $dist$, is defined as the median of the pairwise Euclidean distances between classes. We found out that in a more general case, $C$ should be inversely proportional to $dist^{q+1}$. On the other hand, we observed that a good choice of $C$ also depends on the sample size $n$ and the dimension of features $d$. In our numerical experiments, we empirically set the value of $C$ to be
$10^{q+1}\max \big\{1,\frac{10^{q-1} \log(n)\max\{1000,d\}^{\frac{1}{3}}}{dist^{q+1}}\big\}$,
where $\log(\cdot)$ is the natural logarithm.

%%**************************************************
\subsection{Scaling of data}

A technique which is very important in implementing ADMM based methods in practice to
achieve fast convergence
is the data scaling technique.
Empirically, we have observed that
it is good to scale the matrix $Z$ in \eqref{eq-gen} so that the magnitude of all the blocks in the equality constraint would be roughly the same. Here we choose the scaling factor to be $Z_{\rm scale} = \sqrt{\norm{X}_F}$, 
where $\norm{\cdot}_F$ is the Frobenius norm. Hence the optimization model in \eqref{eq-gen} becomes:
\begin{eqnarray}
\begin{array}{ll}
\min &\sum_{i=1}^n \frac{1}{r_i^q}+ C \inprod{e}{\xi}+\delta_{\widetilde{B}}(\tilde{u}) + \delta_{\R^n_+}(\xi)
\\[8pt]
\text{s.t.} & \widetilde{Z}^T \,\tilde{w}
+\beta y+\xi-r=0,\; r > 0,\\[0pt]
&  D(\tilde{w}- \tilde{u}) =0, \;\;  \tilde{w}, \tilde{u}\in\R^d, \; r,\xi\in \R^n,
\end{array}
\label{eq-gen-scale}
\end{eqnarray}
where $\widetilde{Z} = \frac{Z}{Z_{\rm scale}}$, 
$\tilde{w} = Z_{\rm scale} w$, $\tilde{u} = Z_{\rm scale} u$, and
$\widetilde{B}=\{\tilde{w} \in \R^d \mid \norm{\tilde{w}}\leq Z_{\rm scale}\}$. Therefore, if we have computed an optimal solution $(r^*,\tilde{w}^*,\beta^*,\xi^*,\tilde{u}^*)$ of
\eqref{eq-gen-scale}, then
$(r^*,Z^{-1}_{\rm scale}\tilde{w}^*,\beta^*,\xi^*,Z^{-1}_{\rm scale}\tilde{u}^*)$
would be an optimal solution of \eqref{eq-gen}.

%%%%%%%%%%%%%%%%%%%%
\subsection{Stopping condition for inexact sGS-ADMM}\label{subsec-stopping}

We measure the accuracy of an approximate optimal solution $(r,w,\beta,\xi,u,\alpha,\rho)$ for \eqref{eq-gen-scale} based on the KKT optimality conditions \eqref{eq-optimality} by defining the following relative residuals:
\begin{eqnarray*}
	\arraycolsep=1.4pt\def\arraystretch{1.5}
	\begin{array}{lllll}
		\eta_{C_1} = \frac{|y^T \alp|}{1+C}, & \eta_{C_2} = \frac{|\xi^T (Ce-\alp)|}{1+C}, & \eta_{C_3} = \frac{\norm{\alp - s}^2}{1+C} \text{ with } s_i = \frac{q}{r_i^{q+1}} , \\
		\eta_{P_1} = \frac{\norm{\widetilde{Z}^T \tilde{w} + \beta y + \xi - r}}{1+C}, 
		& \eta_{P_2} = \frac{\norm{D(\tilde{w}-\tilde{u})}}{1+C}, & \eta_{P_3} = \frac{\max\{\norm{\tilde{w}}-Z_{\rm scale},0\}}{1+C} , 
		\\
		\eta_{D_1} = \frac{\norm{\min\{0,\alp\}}}{1+C}, & \eta_{D_2} = \frac{\norm{\max\{0,\alp-Ce\}}}{1+C} ,
	\end{array}
\end{eqnarray*}
where $Z_{\rm scale}$ is a scaling factor which has been discussed in the last subsection.
Additionally, we calculate the relative duality gap by:
$$
\eta_{gap} := \frac{|\primobj-\dualobj|}{1+|\primobj|+|\dualobj|},
$$
where
$\primobj = \sum_{i=1}^n \frac{1}{r_i^q}+ C \inprod{e}{\xi}, \; 
\dualobj =  \kappa\sum_{i=1}^n \alp_i^{\frac{q}{q+1}}- Z_{\rm scale}\norm{\widetilde{Z}\alp}, \text{ with } 
\kappa = \frac{q+1}{q} q^{\frac{1}{q+1}}.$
We should emphasize that although for machine learning problems, a high accuracy solution is
usually not required,  it is important however to use
the KKT optimality conditions as the stopping criterion to find a moderately
accurate solution in order to design a robust solver.

We terminate the solver when $\max\{\eta_P,\eta_D\}<10^{-5}$,  $\min\{\eta_C,\eta_{gap}\}<\sqrt{10^{-5}}$, and $\max\{\eta_C,\eta_{gap}\} < 0.05$.
Here, $\eta_C = \max\{\eta_{C_1},\eta_{C_2},\eta_{C_3}\}, \; \eta_P = \max\{\eta_{P_1},\eta_{P_2},\eta_{P_3}\}$, and $ \eta_D = \max\{\eta_{D_1},\eta_{D_2}\}$.
Furthermore, the maximum number of iterations is set to be 2000.

%%%%%%%%%%%%%%%%%%%%%%%
\subsection{Adjustment of Lagrangian parameter $\sigma$ }

Based upon some preliminary experiments, we set our initial Lagrangian parameter $\sig$ to be $\sigma_0=\min\{10C, n\}^q$, where $q$ is the exponent in \eqref{eq-gen},
and adapt the following strategy to update $\sigma$ to improve the convergence speed of the algorithm in practice:

\begin{description}
	\item[Step 1.] Set $\chi=\frac{\eta_P}{\eta_D}$, where $\eta_P$ and $\eta_D$ are defined in subsection \ref{subsec-stopping};
	\item[Step 2.] If $\chi>\theta$, set $\sig_{k+1} = \zeta \sig_k$; elseif $\frac{1}{\chi}>\theta$, set $\sig_{k+1} = \frac{1}{\zeta} \sig_k$.
\end{description}
Here we empirically set $\theta$ to be 5 and $\zeta$ to be 1.1. Nevertheless, if we have either $\eta_P\ll\eta_D$ or $\eta_D\ll\eta_P$, then we would increase $\zeta$ accordingly, say 2.2 if $\max\{\chi,\frac{1}{\chi}\}>500$ or 1.65 if $\max\{\chi,\frac{1}{\chi}\}>50$.

%%%%%%%%%%%%%%%%%%%%%%%%%%%
\subsection{Performance of the sGS-ADMM on UCI data sets}
In this subsection, we test our algorithm on instances from the UCI data repository  (\citealt{UCI}). 
The datasets we have chosen here are all classification problems with two classes.
However, the size for each class may not be balanced. To tackle the case of uneven class proportions, we use the weighted DWD model discussed in \cite{qiao}. Specifically,
we consider the  model \eqref{eq-primal} using $e = \bfone$ and the term
$\sum_{i=1}^n 1/r_i^{q}$ is replaced by $\sum_{i=1}^n \tau_i^q/ r_i^{q}$,  with
the weights $\tau_i$  given as follows:
\begin{eqnarray*}
	\tau_i = \left\{ \begin{array}{ll}
		\frac{\tau_-}{\max\{\tau_+,\tau_-\}} & \mbox{if $y_i=+1$} \\[5pt]
		\frac{\tau_+}{\max\{\tau_+,\tau_-\}} & \mbox{if $y_i=-1$}
	\end{array}, \right.
\end{eqnarray*}
where $\tau_\pm =
\big(|n_\pm| K^{-1}\big)^{\frac{1}{1+q}}$.
Here $n_{\pm}$ is the number of data points with class label $\pm 1$ respectively and $K:={n}/{\log(n)}$ is a normalizing factor.

\begin{center}
	\spacingset{1.2}
	\begin{footnotesize}	
		\begin{longtable}{| c | c | c | c | c | c| c| c| c|}
			\hline
			\multicolumn{1}{|c}{Data} & \multicolumn{1}{|c|}{$n$}
			& \multicolumn{1}{|c|}{$d$}
			& \multicolumn{1}{|c|}{$C$}
			& \multicolumn{1}{|c|}{Iter}  & \multicolumn{1}{|c|}{Time (s)}
			& \multicolumn{1}{|c|}{psqmr$|$double}
			& \multicolumn{1}{|c|}{Train-error (\%)}
			\\ \hline
			
			\endhead
			 a8a &22696 & 123 &6.27e+02 &201  &2.28  &   0$|$201 &15.10
			\\[3pt]\hline
			a9a &32561 & 123 &6.49e+02 &201  &2.31  &   0$|$201 &14.93
			\\[3pt]\hline
			covtype &581012 &  54 &3.13e+03 &643  &104.34  &   0$|$191 &23.74
			\\[3pt]\hline
			gisette &6000 &4972 &1.15e+04 &101  &39.69  &   0$|$ 49 &0.17
			\\[3pt]\hline
			gisette-scale &6000 &4956 &1.00e+02 &201  &59.73  &   0$|$201 &0.00
			\\[3pt]\hline
			ijcnn1 &35000 &  22 &4.23e+03 &401  &3.16  &   0$|$401 &7.77
			\\[3pt]\hline
			mushrooms &8124 & 112 &3.75e+02 & 81  &1.09  &   0$|$ 81 &0.00
			\\[3pt]\hline
			real-sim &72309 &20958 &1.55e+04 &210  &47.69  & 875$|$210 &1.45
			\\[3pt]\hline
			w7a &24692 & 300 &5.95e+02 &701  &4.84  &   0$|$701 &1.17
			\\[3pt]\hline
			w8a &49749 & 300 &6.36e+02 &906  &9.43  &   0$|$906 &1.20
			\\[3pt]\hline
			rcv1 &20242 &44505 &9.18e+03 & 81  &9.18  & 234$|$ 49 &0.63
			\\[3pt]\hline
			leu &  38 &7129 &1.00e+02 &489  &2.84  &   0$|$489 &0.00
			\\[3pt]\hline
			prostate & 102 &6033 &1.00e+02 & 81  &3.19  &   0$|$ 81 &0.00
			\\[3pt]\hline
			farm-ads &4143 &54877 &3.50e+03 & 81  &6.92  & 792$|$ 81 &0.14
			\\[3pt]\hline
			dorothea & 800 &88120 &1.00e+02 & 51  &4.44  &   0$|$ 51 &0.00
			\\[3pt]\hline
			url-svm &256000 &685896 &1.18e+06 &121  &294.55  & 364$|$121 &0.01
			\\[3pt]\hline

			\caption{The performance of our inexact sGS-ADMM method on the UCI data sets.  }
			\label{table-UCI}
		\end{longtable}
	\end{footnotesize}		
\end{center}

Table \ref{table-UCI} presents the number of iterations and runtime required, as well as training error produced when we perform our inexact sGS-ADMM algorithm to solve {16} data sets. Here, the running time is the total time spent in reading the training data and in solving
the DWD model. The timing
for getting the best penalty parameter C is excluded.
The results are generated using the exponent $q=1$.
In the table, ``psqmr" is the iteration count for the preconditioned symmetric quasi-minimal residual method for solving the linear system \eqref{eq-linsys}. A `0' for ``psqmr" means that we are using a direct solver as mentioned in subsection \ref{subsec-directSolver} and \ref{subsec-smw}. {Under the column ``double" in Table \ref{table-UCI}, we also record the number of iterations  for which the extra Step 1c is executed to ensure the convergence of Algorithm 1.

Denote the index set $S=\{i \mid y_i [\text{sgn}(\beta+x_i^T w)]\le 0, i=1,\ldots,n\}$ for which the data instances are categorized wrongly, where $sgn(x)$ is the sign function. The training and testing errors are both defined by $\frac{|S|}{n}\times 100 \%$, where $|S|$ is the cardinality of the set $S$.}

Our algorithm is capable of solving all the data sets, even when the size of the data matrix is huge.
In addition, for data with unbalanced class size, such as w7a and w8a, our algorithm is able to produce a classifier with small training error.

%%%%%%%%%%%%%%%%%%%%%%%%%%%%%
\subsection{Comparison with other solvers}

In this subsection, we compare our inexact sGS-ADMM method for solving \eqref{eq-primal}
via  \eqref{eq-gen}
with
the primal-dual interior-point method implemented in \cite{SDPT3} and used in \cite{MarronTodd-2007}. We also compare our method with
the directly extended (semi-proximal) ADMM (using
the aggressive step-length 1.618)  even though the latter's convergence is not
guaranteed.
Note that the directly extended ADMM we have implemented here follows exactly the
same design used for sGS-ADMM, except that
we switch off the additional Step 1c in the Algorithm 1.
We should emphasize that our directly extended ADMM is not a simple
adaption of the classical ADMM, but instead incorporates all the sophisticated techniques
we have developed for sGS-ADMM.

We will report our computational results for two different values of the exponent, $q=1$ and $q=2$, in Tables \ref{table-comparison} and  \ref{table-comparison-2}, respectively.

\begin{center}
	\spacingset{1.2}
	\begin{footnotesize}
		\footnotesize
		\setlength{\tabcolsep}{1.5pt}
		\begin{longtable}{|| c | c | c | c || c | c | c|| c| c |c || c| c| c||}
			\hline
			\multicolumn{4}{||c||}{exponent $q=1$} & \multicolumn{3}{c||}{sGS-ADMM} & \multicolumn{3}{c||}{directADMM} & \multicolumn{3}{c||}{IPM} \\
			\hline
			\multicolumn{1}{||c|}{Data} & \multicolumn{1}{|c|}{$n$} & \multicolumn{1}{|c|}{$d$}
			& \multicolumn{1}{|c||}{$C$}  & \multicolumn{1}{|c|}{Time (s)} & \multicolumn{1}{|c|}{Iter}
			& \multicolumn{1}{|c||}{Error (\%)} & \multicolumn{1}{|c|}{Time (s)} & \multicolumn{1}{|c|}{Iter}
			& \multicolumn{1}{|c||}{Error (\%)} & \multicolumn{1}{|c|}{Time (s)} & \multicolumn{1}{|c|}{Iter}
			& \multicolumn{1}{|c||}{Error (\%)}
			\\ \hline
			
			\endhead
			
			 a8a &22696 & 123 &6.27e+02&2.28 &201 &15.10&2.01 &219 &15.10&1321.20 & 47 &15.09
			\\[3pt]\hline
			a9a &32561 & 123 &6.49e+02&2.31 &201 &14.93&2.12 &258 &14.93&2992.60 & 43 &14.93
			\\[3pt]\hline
			covtype &581012 &  54 &3.13e+03&104.34 &643 &23.74&100.26 &700 &23.74 & - & - & - 
			\\[3pt]\hline
			gisette &6000 &4972 &1.15e+04&39.69 &101 &0.17&33.14 & 70 &0.25&2403.40 & 62 &20.50*
			\\[3pt]\hline
			gisette-scale &6000 &4956 &1.00e+02&59.73 &201 &0.00&152.05 &2000 &0.00&49800.58 & 84 &17.80*
			\\[3pt]\hline
			ijcnn1 &35000 &  22 &4.23e+03&3.16 &401 &7.77&2.95 &501 &7.77&1679.34 & 34 &7.77
			\\[3pt]\hline
			mushrooms &8124 & 112 &3.75e+02&1.09 & 81 &0.00&1.42 &320 &0.00&136.67 & 50 &0.00
			\\[3pt]\hline
			real-sim &72309 &20958 &1.55e+04&47.69 &210 &1.45&89.55 &702 &1.42 & - & - & - 
			\\[3pt]\hline
			w7a &24692 & 300 &5.95e+02&4.84 &701 &1.17&8.52 &2000 &1.16&4474.13 & 53 &1.17
			\\[3pt]\hline
			w8a &49749 & 300 &6.36e+02&9.43 &906 &1.20&13.58 &2000 &1.16 & - & - & - 
			\\[3pt]\hline
			rcv1 &20242 &44505 &9.18e+03&9.18 & 81 &0.63&16.07 &245 &0.63&9366.41 & 43 &0.17
			\\[3pt]\hline
			leu &  38 &7129 &1.00e+02&2.84 &489 &0.00&5.35 &2000 &0.00&1.33 & 11 &0.00
			\\[3pt]\hline
			prostate & 102 &6033 &1.00e+02&3.19 & 81 &0.00&4.13 &2000 &0.00&7.73 & 19 &0.00
			\\[3pt]\hline
			farm-ads &4143 &54877 &3.50e+03&6.92 & 81 &0.14&4.20 & 61 &0.14&642.79 & 54 &0.24
			\\[3pt]\hline
			dorothea & 800 &88120 &1.00e+02&4.44 & 51 &0.00&37.31 &2000 &0.00&15.53 & 23 &0.00
			\\[3pt]\hline
			url-svm &256000 &685896 &1.18e+06&294.55 &121 &0.01&264.52 &195 &0.00 & - & - & - 
			\\[3pt]\hline

			\caption{Comparison between the performance of our inexact sGS-ADMM, directly extended ADMM ``directADMM", and the interior point method ``IPM" on the UCI data sets. A `*' next to the error in the table means that the problem set cannot be solved properly by the respective solver; `-' means the algorithm cannot solve the dataset due to insufficient computer memory.}
			\label{table-comparison}
		\end{longtable}
	\end{footnotesize}
\end{center}

Table \ref{table-comparison} reports the runtime, number of iterations required as well as the training error of 3 different solvers for solving the UCI data sets. We can observe that the interior point method is almost always the slowest to achieve optimality compared to the other two solvers, despite requiring the least number of iterations, especially when the sample size $n$ is large.
The inefficiency of the interior-point method is caused by its need to solve an $n\times n$ linear system of equations in each iteration, which could be very expensive if $n$ is large.
In addition, it cannot solve the DWD problem where $n$ is huge due to the excessive computer memory needed to store the large $n\times n$ matrix.

On the other hand, our inexact sGS-ADMM method outperforms the directly extended (semi-proximal) ADMM for 
9 out of {16 cases} in terms of runtime. For the other cases, we are only slower by a relatively small margin. Furthermore, when our algorithm outperforms the directly extended ADMM, it often shortens the runtime by a large margin.
In terms of number of iterations, for {14 out of 16 cases}, the directly extended ADMM requires at least the same number of iterations as our inexact sGS-ADMM method. We can say that our algorithm is remarkably efficient and it further possesses a
convergence guarantee. In contrast,  the directly extended ADMM
is not guaranteed to converge although it is also very efficient when it
does converge. We can observe that the directly extended ADMM sometimes
would take many more iterations to solve a problem compared to
our inexact sGS-ADMM, especially for the instances in Table \ref{table-comparison-2},
possibly because the lack of a convergence guarantee makes it difficult for the method to find a sufficiently accurate approximate optimal solution.

To summarize, our inexact sGS-ADMM method is an efficient yet convergent algorithm for solving the primal form of the DWD model. It is also able to solve large scale problems which cannot be handled by the interior point method.

\begin{center}
	\spacingset{1.2}
	\begin{footnotesize}
		\footnotesize
		\setlength{\tabcolsep}{1.5pt}
		\begin{longtable}{|| c | c | c | c || c | c | c|| c| c |c || c| c| c||}
			\hline
			\multicolumn{4}{||c||}{exponent $q=2$} & \multicolumn{3}{c||}{sGS-ADMM} & \multicolumn{3}{c||}{directADMM} & \multicolumn{3}{c||}{IPM} \\
			\hline
			\multicolumn{1}{||c|}{Data} & \multicolumn{1}{|c|}{$n$} & \multicolumn{1}{|c|}{$d$}
			& \multicolumn{1}{|c||}{$C$}  & \multicolumn{1}{|c|}{Time (s)} & \multicolumn{1}{|c|}{Iter}
			& \multicolumn{1}{|c||}{Error (\%)} & \multicolumn{1}{|c|}{Time (s)} & \multicolumn{1}{|c|}{Iter}
			& \multicolumn{1}{|c||}{Error (\%)} & \multicolumn{1}{|c|}{Time (s)} & \multicolumn{1}{|c|}{Iter}
			& \multicolumn{1}{|c||}{Error (\%)}
			\\ \hline
			
			\endhead
			
			 a8a &22696 & 123 &1.57e+04&2.77 &248 &15.10&2.97 &533 &15.11&7364.34 & 45 &15.17
			\\[3pt]\hline
			a9a &32561 & 123 &1.62e+04&2.92 &279 &14.94&5.78 &1197 &14.94&16402.05 & 48 &14.95
			\\[3pt]\hline
			covtype &581012 &  54 &1.52e+05&81.63 &368 &23.74&275.29 &2000 &23.74 & - & - & - 
			\\[3pt]\hline
			gisette &6000 &4972 &1.01e+06&39.72 &101 &0.03&28.42 & 81 &0.03&2193.74 & 55 &0.00
			\\[3pt]\hline
			gisette-scale &6000 &4956 &1.00e+03&88.89 &439 &0.00&147.80 &2000 &1.20&31827.33 & 53 &0.00
			\\[3pt]\hline
			ijcnn1 &35000 &  22 &2.69e+05&2.76 &233 &7.94&4.80 &1056 &7.87&1959.68 & 38 &7.98
			\\[3pt]\hline
			mushrooms &8124 & 112 &7.66e+03&1.59 &301 &0.00&3.63 &2000 &0.17&594.33 & 52 &0.00
			\\[3pt]\hline
			real-sim &72309 &20958 &1.10e+06&43.20 &180 &1.51&23.56 &181 &1.51 & - & - & - 
			\\[3pt]\hline
			w7a &24692 & 300 &1.44e+04&3.96 &473 &1.15&7.83 &2000 &2.69&5448.78 & 48 &1.18
			\\[3pt]\hline
			w8a &49749 & 300 &1.54e+04&7.07 &543 &1.13&13.57 &2000 &2.68 & - & - & - 
			\\[3pt]\hline
			rcv1 &20242 &44505 &4.69e+05&9.47 & 81 &1.16&7.54 &101 &1.16&8562.76 & 47 &0.24*
			\\[3pt]\hline
			leu &  38 &7129 &1.00e+03&1.72 &204 &0.00&4.52 &2000 &0.00&2.82 & 23 &0.00
			\\[3pt]\hline
			prostate & 102 &6033 &1.00e+03&3.32 &111 &0.00&3.87 &2000 &0.00&8.33 & 21 &0.00
			\\[3pt]\hline
			farm-ads &4143 &54877 &5.21e+04&37.79 &401 &0.19&8.31 &146 &0.19&343.13 & 43 &0.27
			\\[3pt]\hline
			dorothea & 800 &88120 &1.00e+03&4.39 & 51 &0.00&31.49 &2000 &0.00&16.84 & 25 &0.00
			\\[3pt]\hline
			url-svm &256000 &685896 &5.49e+07&452.81 &205 &0.02&587.46 &490 &0.02 & - & - & - 
			\\[3pt]\hline

			\caption{
			Same as Table \ref{table-comparison} but 
			 for $q=2$.}
			\label{table-comparison-2}
		\end{longtable}
	\end{footnotesize}
\end{center}

Table \ref{table-comparison-2} reports the runtime, number of iterations required
as well as the training error of 3 different solvers for solving the UCI data sets for the case when $q=2$. Again, we can see that the interior point method is almost always the slowest to converge to optimality.

Our sGS-ADMM algorithm outperforms the directly extended ADMM algorithm in {12 out of 16} data sets in terms of runtime. 
%For the other {2} cases, our algorithm is only slightly slower. 
In terms of the number of iterations, it has the best performance among almost all the data sets. On the other hand, for  {8} data sets, the number of iterations required by the directly extended ADMM hits the maximum iterations allowed, probably implying nonconvergence of the method. For the  interior point method, it
takes an even longer time to solve the problems
compared to the case when $q=1$. This is due to an increase in the number of constraints generated
in the second-order cone programming formulation of the DWD model with $q=2$.

The numerical result we obtained in this case is consistent with the one we obtained for the case $q=1$. This further shows the merit of our algorithm in a more general setting. We could also expect the similar result when the exponent is 4 or 8.

%%%%%%%%%%%%%%%%%%%%%%%%%%%%
\subsection{Comparison with LIBSVM and LIBLINEAR}

In this subsection, we will compare the performance of our DWD model to the state-of-the-art model support 
vector machine (SVM). We apply our sGS-ADMM algorithm as the DWD model and use
LIBSVM in \cite{LIBSVM} as well as LIBLINEAR in \cite{LIBLINEAR} to implement the SVM model. LIBSVM is a 
general solver for solving SVM models with different kernels; while LIBLINEAR is a solver highly specialized 
in solving SVM with linear kernels. LIBLINEAR is a fast linear classifier; in particular, we would apply it to the 
dual of L2-regularized L1-loss support vector classification problem. We would like to emphasize that the solution given by LIBSVM using linear kernel and that given by LIBLINEAR is not exactly the same. This may be
due to the reason that LIBLINEAR has internally preprocessed the data and 
assumes that there is no bias term in the model.

{The parameters used in LIBSVM are chosen to be the same as in \cite{unifapg}, whereas for LIBLINEAR, we make use of the default parameter $C = 1$ for all the datasets.}

Table \ref{table-comparison-LIBSVM} shows the runtime and number of iterations needed for 
solving the binary classification problem via the DWD and SVM models respectively on the UCI datasets. It also gives the training and testing classification error produced by the three algorithms.  Note that 
the training time excludes the time for computing the best penalty parameter $C$.
The stopping tolerance for all algorithms is set to be $10^{-5}$. 
For LIBLINEAR, we observed that the default maximum number of iteration is breached for many datasets. 
Thus we increase the maximum number of iteration from the default 1000 to 20000.

In terms of runtime, LIBLINEAR is almost always the fastest to solve the problem, except for the
{two largest datasets (rcv1,url-svm) for which our algorithm is about 2-3 times faster}. 
Note  that the maximum iteration is reached for {these two datasets}. On the other hand, 
LIBSVM is almost always the slowest solver. It may only be faster than sGS-ADMM for small datasests (3 cases). 
Our algorithm can be 50-100 times faster than LIBSVM when solving large data instances. Furthermore,
LIBSVM may have the risk of not being able to handle extremely large-scaled datasets. For example, 
it cannot solve the biggest dataset (url-svm) within 24 hours. 

In terms of training and testing error, we may observe from the table that the DWD and SVM models 
produced comparable training classification errors, although there are some discrepancies due to the 
differences in the models and penalty parameters used. On the other hand, the testing errors vary across 
different solvers. For most datasets, the DWD model (solved by sGS-ADMM) produced smaller (sometimes much smaller) testing errors than
the other algorithms (8 cases); whereas the SVM model (solved by LIBLINEAR) produced the 
worst testing errors among all algorithms {(5 cases)}. The discrepancy between the testing errors 
given by LIBSVM and LIBLINEAR may be due to the different treatment of the bias term in-built into the algorithms.

It is reasonable to claim that our algorithm is more efficient than the extremely fast solver LIBLINEAR
in solving large data instances even though our algorithm is designed for the more complex
DWD model compared to the simpler SVM model.
Moreover, our algorithm for solving the DWD model is able to produce testing errors
which are generally better than those produced by LIBLINEAR for solving the SVM model.

\begin{center}
	\spacingset{1.2}
	\begin{footnotesize}
		\footnotesize
		\setlength{\tabcolsep}{0.8pt}
		\begin{longtable}{|| c | c | c |c || c | c | c|c|| c|c | c|c || c| c| c|c||}
			\hline
			\multicolumn{4}{||c||}{} & \multicolumn{4}{c||}{DWD via sGS-ADMM} & \multicolumn{4}{c||}{SVM via LIBLINEAR} & \multicolumn{4}{c||}{SVM via LIBSVM} \\
			\hline
			\multicolumn{1}{||c|}{Data} & \multicolumn{1}{|c|}{$n$} & \multicolumn{1}{|c|}{$d$} & \multicolumn{1}{|c||}{$n_{test}$}
			& \multicolumn{1}{|c|}{Time} & \multicolumn{1}{|c|}{Iter} 	& \multicolumn{1}{|c|}{$Err_{tr}$} & \multicolumn{1}{|c||}{$Err_{test}$} & \multicolumn{1}{|c|}{Time} & \multicolumn{1}{|c|}{Iter} 
			& \multicolumn{1}{|c|}{$Err_{tr}$} & \multicolumn{1}{|c||}{$Err_{test}$} & \multicolumn{1}{|c|}{Time} & \multicolumn{1}{|c|}{Iter} & \multicolumn{1}{|c|}{$Err_{tr}$} 	& \multicolumn{1}{|c||}{$Err_{test}$}
			\\ \hline			
			\endhead
			
			 a8a &22696 & 123  &9865 &2.28 &201 &15.10  &14.67 &0.69 &20000 &15.32  &14.87&50.45 &34811 &15.44  &14.80
			\\[3pt]\hline
			a9a &32561 & 123  &16281 &2.31 &201 &14.93  &15.19 &0.79 &6475 &15.01  &15.02&93.91 &25721 &15.24  &15.03
			\\[3pt]\hline
			covtype &581012 &  54 & / &104.34 &643 &23.74 & / &23.53 &20000 &23.69 & /&19641.19 &224517 &23.70 & /
			\\[3pt]\hline
			gisette &6000 &4972  &1000 &39.69 &101 &0.17  &3.00 &4.34 &132 &0.00  &10.70&85.93 &14818 &0.40  &14.30
			\\[3pt]\hline
			gisette-scale &6000 &4956  &1000 &59.73 &201 &0.00  &2.50 &33.07 &484 &0.00  &2.30&152.27 &15738 &0.40  &2.20
			\\[3pt]\hline
			ijcnn1 &35000 &  22  &91701 &3.16 &401 &7.77  &7.82 &0.55 &11891 &8.70  &8.35&27.04 &10137 &9.21  &8.76
			\\[3pt]\hline
			mushrooms &8124 & 112 & / &1.09 & 81 &0.00 & / &0.19 &326 &0.00 & /&0.64 &1003 &0.00 & /
			\\[3pt]\hline
			real-sim &72309 &20958 & / &47.69 &210 &1.45 & / &7.56 &1190 &1.07 & /&3846.07 &52591 &1.37 & /
			\\[3pt]\hline
			w7a &24692 & 300  &25057 &4.84 &701 &1.17  &1.30 &0.43 &1689 &1.28  &10.12&14.79 &62830 &1.37  &1.38
			\\[3pt]\hline
			w8a &49749 & 300  &14951 &9.43 &906 &1.20  &1.31 &2.45 &13095 &1.18  &9.64&63.27 &124373 &1.36  &1.43
			\\[3pt]\hline
			rcv1 &20242 &44505  &677399 &9.18 & 81 &0.63  &5.12 &33.98 &20000 &0.15  &5.17&819.32 &33517 &0.62  &13.82
			\\[3pt]\hline
			leu &  38 &7129  &  34 &2.84 &489 &0.00  &5.88 &0.30 & 27 &0.00  &20.59&0.59 &184 &0.00  &17.65
			\\[3pt]\hline
			prostate & 102 &6033 & / &3.19 & 81 &0.00 & / &2.73 &353 &0.00 & /&2.54 &1063 &0.00 & /
			\\[3pt]\hline
			farm-ads &4143 &54877 & / &6.92 & 81 &0.14 & / &6.11 &5364 &0.14 & /&10.34 &15273 &0.14 & /
			\\[3pt]\hline
			dorothea & 800 &88120  & 350 &4.44 & 51 &0.00  &5.14 &0.83 & 11 &0.00  &8.86&7.48 &4553 &0.00  &7.71
			\\[3pt]\hline
			url-svm &256000 &685896 & / &294.55 &121 &0.01 & / &660.39 &20000 &0.01 & /& - & - & -  & -
			\\[3pt]\hline
					
			\caption{Comparison between the performance of our inexact sGS-ADMM on DWD model with LIBLINEAR and LIBSVM on SVM model. $n_{test}$ is the size of testing sample, $Err_{tr}$ is the percentage of training error; while $Err_{test}$ is that of the testing error. `-' means the result cannot be obtained within 24 hours, and `/' means test sets are not available.}
			\label{table-comparison-LIBSVM}
		\end{longtable}
	\end{footnotesize}
\end{center}

%%%%%%%%%%%%%%%%%%%%%%%%%%%%%%%%%%%%%%%%%%
\section{Conclusion}\label{sec-conclusion}

In this paper, by making use of the recent advances in ADMM from the work in \cite{STY}, \cite{LST} and \cite{CST}, we proposed a convergent 3-block inexact symmetric
Gauss-Seidel-based semi-proximal ADMM algorithm for solving large scale DWD problems. We applied the algorithm successfully to the primal formulation of the DWD model and designed highly efficient routines to solve the subproblems arising in each of the inexact sGS-ADMM iterations. Numerical experiments for the cases when the exponent equals to 1 and 2 demonstrated that our algorithm is capable of solving large scale problems, even when the sample size and/or the feature dimension is huge. In addition, it is also highly efficient while guaranteeing the convergence to optimality. As a conclusion, we have designed an efficient method for solving the binary classification model through DWD.

\bigskip
\begin{center}
{\large\bf SUPPLEMENTARY MATERIAL}
\end{center}

\begin{description}

\item[Matlab package:] MATLAB-package containing codes to perform the sGS-ADMM algorithm for the DWD model described in the article. The package also contains datasets used for experiment purpose in the article. It could be downloaded from http://www.math.nus.edu.sg/$\sim$mattohkc/DWDLarge.zip.

\item[R package:] R-package containing codes to perform the sGS-ADMM algorithm for the DWD model described in the article. It would be available at CRAN.
%\\
%{\tt http://www.math.nus.edu.sg/$\sim$mattohkc/DWDLargeR.zip}

\bigskip
\begin{center}
	{\large\bf APPENDIX}
\end{center}
\item[PSQMR algorithm:] In the following we shall present a brief version of the PSQMR method to solve the linear system $Ax = b$. The following is a simplified algorithm without preconditioner.

Given initial iterate $x_0$, define each of the following:
$r_0 = b - Ax_0; \; q_0 = r_0; \; \tau_0 = \norm{q_0}; \; \theta_0 = 0; \; d_0 = 0$. 

Then for each iteration $k$ until convergent condition is met, compute:
\begin{eqnarray*}
r_k &=& r_{k-1} - \frac{r_{k-1}^T r_{k-1}}{q_{k-1}^T Aq_k} Aq_k \\
\theta_k &=& \frac{\norm{r_k}}{\tau_{k-1}} \\
c_k &=& \frac{1}{\sqrt{1+\theta_k^2}} \\
\tau_k &=& \tau_{k-1} \theta_k c_k \\
d_k &=& c_k^2 \theta_{k-1}^2 d_{k-1} + c_k^2 \frac{r_{k-1}^T r_{k-1}}{q_{k-1}^T Aq_k} Aq_k \\
x_k &=& x_{k-1} + d_k \\
q_k &=& r_k + \frac{r_{k}^T r_{k}}{r_{k-1}^T r_{k-1}} q_{k-1} \\	
\end{eqnarray*}
The final $x_k$ obtained is an approximated solution to the linear system.

\end{description}

\spacingset{0.9}
\bibliography{Bibliography-DWD}

\begin{thebibliography}{}

\bibitem[Benito et~al., 2004]{benito}
Benito, M., Parker, J., Du, Q., Wu, J., Xiang, D., Perou, C.~M., and Marron,
  J.~S. (2004).
\newblock Adjustment of systematic microarray data biases.
\newblock {\em Bioinformatics}, 20:105--114.

\bibitem[Chang and Lin, 2011]{LIBSVM}
Chang, C.-C. and Lin, C.-J. (2011).
\newblock {LIBSVM} : a library for support vector machines.
\newblock {\em ACM Transactions on Intelligent Systems and Technology}, 2(27).

\bibitem[Chen et~al., 2016]{directADMM}
Chen, C., He, B., Ye, Y., and Yuan, X. (2016).
\newblock The direct extension of {ADMM} for multi-block convex minimization
  problems is not necessarily convergent.
\newblock {\em Mathematical Programming}, 155:57--79.

\bibitem[Chen et~al., 2017]{CST}
Chen, L., Sun, D.~F., and Toh, K.~C. (2017).
\newblock An efficient inexact symmetric gauss-seidel based majorized {ADMM}
  for high-dimensional convex composite conic programming.
\newblock {\em Mathematical Programming}, 161:237--270.

\bibitem[Fan et~al., 2008]{LIBLINEAR}
Fan, P.-E., Chang, K.-W., Hsieh, C.-J., Wang, X.-R., and Lin, C.-J. (2008).
\newblock {LIBLINEAR}: A library for large linear classification.
\newblock {\em Journal of Machine Learning Research}, 9:1871--1874.

\bibitem[Fazel et~al., 2013]{FPST}
Fazel, M., Pong, T.~K., Sun, D.~F., and Tseng, P. (2013).
\newblock Hankel matrix rank minimization with applications to system
  identification and realization.
\newblock {\em SIAM J. Matrix Analysis and Applications}, 34:946--977.

\bibitem[Fern\'{a}ndez-Delgado et~al., 2014]{FCBA}
Fern\'{a}ndez-Delgado, M., Cernadas, E., Barro, S., and Amorim, D. (2014).
\newblock Do we need hundreds of classifiers to solve real world classification
  problems?
\newblock {\em Journal of Machine Learning Research}, 15:3133--3181.

\bibitem[Freund, 1997]{PSQMR}
Freund, R.~W. (1997).
\newblock Preconditioning of symmetric, but highly indefinite linear systems.
\newblock In {\em Proceedings of 15th {IMACS} World Congress on Scientific
  Computation Modelling and Applied Mathematics}, pages 551 -- 556. Berlin,
  Germany.

\bibitem[Golub and Loan, 1996]{GVbook}
Golub, G.~H. and Loan, C. F.~V. (1996).
\newblock {\em Matix Computation}.
\newblock John Hopkins University Press, third edition.

\bibitem[Ito et~al., 2015]{unifapg}
Ito, N., Takeda, A., and Toh, K.~C. (2015).
\newblock A fast united classification algorithm based on accelerated proximal
  gradient method.
\newblock {\em Journal of Machine Learning Research}.

\bibitem[Li et~al., 2015]{LST2}
Li, X.~D., Sun, D.~F., and Toh, K.~C. (2015).
\newblock {QSDPNAL}: A two-phase {Newton-CG} proximal augmented {Lagrangian}
  method for convex quadratic semidefinite programming problems.
\newblock {\em arXiv:1512.08872}.

\bibitem[Li et~al., 2016]{LST}
Li, X.~D., Sun, D.~F., and Toh, K.~C. (2016).
\newblock A {Schur} complement based semi-proximal {ADMM} for convex quadratic
  conic programming and extensions.
\newblock {\em Mathematical Programming}, 155:333--373.

\bibitem[Lichman, 2013]{UCI}
Lichman, M. (2013).
\newblock {UCI} machine learning repository.

\bibitem[Liu et~al., 2009]{Liu}
Liu, X., Parker, J., Fan, C., Perou, C.~M., and Marron, J.~S. (2009).
\newblock Visualization of cross-platform microarray normalization.
\newblock In {\em Batch Effects and Noise in Microarray Experiments: Sources
  and Solutions (ed A. Scherer)}, chapter~14, pages 167--181. John Wiley \&
  Sons Ltd, Chichester, UK.

\bibitem[Marron and Alonso, 2014]{MarronAlonso}
Marron, J.~S. and Alonso, A.~M. (2014).
\newblock Overview of object oriented data analysis.
\newblock {\em Biometrical Journal}, 56:732--753.

\bibitem[Marron et~al., 2007]{MarronTodd-2007}
Marron, J.~S., Todd, M.~J., and Ahn, J. (2007).
\newblock {Distance weighted discrimination}.
\newblock {\em Journal of the American Statistical Association},
  102:1267--1271.

\bibitem[Qiao et~al., 2010]{qiao}
Qiao, X., Zhang, H.~H., Liu, Y., Todd, M.~J., and Marron, J.~S. (2010).
\newblock Weighted distance weighted discrimination and its asymptotic
  properties.
\newblock {\em Journal of the American Statistical Association}, 105:401--414.

\bibitem[Saad, 2003]{SaadBook}
Saad, Y. (2003).
\newblock {\em Iterative Methods for Sparse Linear Systems}.
\newblock Society for Industrial and Applied Mathematics, second edition.

\bibitem[Sun et~al., 2015]{STY}
Sun, D.~F., Toh, K.~C., and Yang, L.~Q. (2015).
\newblock A convergent 3-block semi-proximal alternating direction method of
  multipliers for conic programming with 4-type constraints.
\newblock {\em SIAM Journal on Optimization}, 25:882--915.

\bibitem[Toh et~al., 1999]{SDPT3}
Toh, K.~C., Todd, M.~J., and Tutuncu, R.~H. (1999).
\newblock {SDPT3} -- a {Matlab} software package for semidefinite programming.
\newblock {\em Optimization Methods and Software}, 11:545--581.

\bibitem[Vapnik, 1995]{Vapnik}
Vapnik, V. (1995).
\newblock {\em The Nature of Statistical Learning Theory}.
\newblock Springer-Verlag, New York.

\bibitem[Wang and Zou, 2015]{WangZou}
Wang, B. and Zou, H. (2015).
\newblock Another look at {DWD}: {Thrifty Algorithm and Bayes Risk Consistency
  in RKHS}.
\newblock {\em arXiv:1508.05913}.

\bibitem[Wei et~al., 2016]{wei2015}
Wei, S., Lee, C., Wichers, L., and Marron, J.~S. (2016).
\newblock Direction-projection-permutation for high dimensional hypothesis
  tests.
\newblock {\em Journal of Computational and Graphical Statistics},
  25(2):549--569.

\end{thebibliography}
\end{document}